\newcommand{\R}{\mathbb{R}}
\newcommand{\N}{\mathbb{N}}
\newcommand{\prob}{\mathbb{P}}
\def\defeq{\mathrel{\mathop:}=}
\newtheorem{thm}{Theorem}
\newtheorem{lem}{Lemma}
\newtheorem{eg}{Example}
\newtheorem{rem}{Remark}
\newtheorem{prp}{Proposition}
\newcommand*{\Scale}[2][4]{\scalebox{#1}{$#2$}}%
\begin{document}

\title{\LARGE \bf On Classical Control and Smart Cities}%\thanks{This work was in part supported by Science Foundation Ireland grant 11/PI/1177.}}%

\author{Andr\'e~R.~Fioravanti, Jakub~Mare\v{c}ek, Robert~N.~Shorten, \\ Matheus~Souza, Fabian~R.~Wirth% <-this % stops a space
\thanks{
A. R. Fioravanti and M. Souza are at the University of Campinas, Brazil.
J. Marecek is with IBM Research -- Ireland, in Dublin, Ireland.
R. N. Shorten is at the University College Dublin, Dublin, Ireland.
F. Wirth is at the University of Passau, Germany.
This work
 was in part supported by Science Foundation Ireland grant 11/PI/1177
 and received funding from the European Union Horizon 2020 Programme (Horizon2020/2014-2020), under grant agreement no 68838.
}}%

\maketitle
\thispagestyle{empty}
\pagestyle{empty}

\begin{abstract}
We discuss the applicability of classical control theory to problems in smart grids and smart cities.
We use tools from iterated function systems to identify controllers with desirable properties.
In particular, controllers are identified that can be used to design not only stable closed-loop systems, but also
 to regulate large-scale populations of agents in a predictable manner.
We also illustrate by means of an example and associated theory that many classical controllers are not be suitable for deployment in these applications.
\end{abstract}

\section{Introduction}

Smart Grid and Smart City research, at a very high level, is about making the best use of existing resources, as we try to manage meet demand for them under application-specific constraints.
This
is a classical consideration of Control Theory, and
while classical control has much to offer in such application areas, % the city,
there are aspects of these contemporary applications %of the Smart City research
 that also offer the opportunity for practitioners to explore new boundaries in control \cite{7563957}.

 First, classical control is typically concerned with regulating a single system such that the system behaviour achieves a desired behaviour in an optimal way. In contrast, in Smart Grid and Smart City applications, the aggregate effect of the actions of an ensemble of (often human) agents is a variable of considerable interest. 
 Further, classical control is concerned with the control of systems whose structure does not vary over time. 
 On the other hand, in Smart Grid and Smart City applications, we typically wish to control and influence the behaviour of large-scale populations, where the number of agents in the ensemble may be uncertain and varying over time.
Third, data sets involved are often obtained in a closed loop setting. That is, operators' decisions are often reflected in the data sets.
Finally, a fundamental difference between classical control and Smart Grid and Smart City control, is the need to study the effect of the control signals on the statistical properties of the populations that we wish to influence. 
Among all of these fundamental differences, it is this last issue, of the need of ergodic feedback systems, that is perhaps most alien to the classical control theorist, and yet the issue that is perhaps the most pressing in real-life applications since the predictability, at the level of individual agents, underpins operators' ability to issue contracts. \newline

%With this %latter comment in mind,
In this paper, our objective is to discuss and explain the need for ergodic control design, present some initial results that identify controllers that give rise to ergodic feedback systems, and also identify some classical controllers,  which may give rise to difficulties.
%There has been much recent interest in ``Smart City'' applications that are concerned with resource %allocation and regulation.
%While control theory has much to offer in solving problems such as the management of urban congestion and pollution,
%a particular care should be taken when the underlying systems are modelled.
%Notably, classical control is typically concerned with regulating a single system such that the system behaviour achieves a desired behaviour in an optimal way, given the constraints imposed by the problem and the available resources.
%In Smart City applications, the aggregate effect of the actions of a multitude of agents
%is the variable of interest from the viewpoint of municipalities,
%with particular focus on stability, just as in classical control.
%Nevertheless, the individuals may be interested in the behaviour of the system from their point of view,
%such as whether an agent's initial allocation influences the same agent's allocation in a stationary distribution.
%We show that from this point of view, many classical controllers fail.}
Our starting point is the observation that many problems that are considered in Smart Grids and Smart Cities can be cast in a framework, where a  large number of agents, such as people, cars, or machines, often with unknown objectives, compete for a limited resource. The challenge of allocating these resources in a manner that is not wasteful, which gives an optimal return on the use of these resources for society, and which, in addition, gives a guaranteed level of service to each of the agents competing for that resource, gives rise to a whole host of problems, which are best addressed in a control-theoretic manner. From a practical perspective, some of these problems may seem unrelated to traditional applications of control.
For example, balancing supply and demand in an energy system may seem familiar to a control engineer, at a high level, there are many variants and contract types, which introduce much additional complexity. Allocating parking spaces, regulating cars competing for shared road space, allocating shared bikes, all at the same time guaranteeing fair and equal access to the participants, are examples of Smart City type applications that seem more removed from the traditional interests of control engineering. However, while each of these applications seem very different, certain key features remain the same. Resource utilisation should be maximised while delivering a certain quality of service to individual agents. From the perspective of a control engineer, this latter statement decompose into three objectives; two of which are familiar to the classical control engineer, one of which constitutes a relatively new consideration. Our first objective is to fully utilise the resource. From a control engineers' perspective, this is a regulation problem. We would then like to make optimal use of the resource. While both of these objectives are concerned with affecting the aggregate behaviour of an agent population, they make no attempt to control the manner in which the agents orchestrate their behaviour to achieve this aggregate effect. Our final, and third, consideration thus focuses on the effects of the control on the microscopic properties of the agent population.\newline

Ultimately, this third concern focuses on the stochastic process that governs the share of the resource that is allocated to an {\em individual agent}. For example, we may wish that each agent, on average, receives a fair share of the resource over time, or, at a much more fundamental level, we wish the average allocation of the resource to each agent over time to be a stable quantity that is entirely predictable and
which does not depend on initial conditions, and which is not sensitive to noise entering the system. From the point of view of the design of the feedback system, these latter concerns are related to the existence of the {\em unique invariant measure} that governs the distribution of the resource amongst the agents in the long run.
Thus, the design of feedback systems for deployment in cities must consider not only the traditional notions of regulation and optimisation, but also the guarantees concerning
the existence of this unique invariant measure. As we shall see, this is not a trivial task and many familiar control strategies, in very simple situations,
do not necessarily give rise to feedback systems, which posses all three of these features.

\section{The Problem}
\label{sec:problem}

\ifpdf

\begin{figure}
\centering
\includegraphics[width=\columnwidth,clip=true]{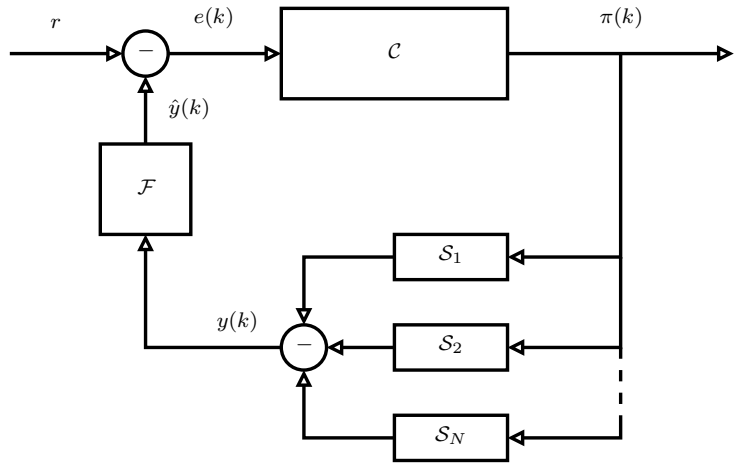}
\caption{Feedback model.}\label{system}
\end{figure}

\else
\begin{figure}
  \centering
  \begin{psfrags}
    \psfrag{+}[c]{\footnotesize $-$}
    \psfrag{r}[b]{{\footnotesize $r$}}
    \psfrag{e}[b]{{\footnotesize $e(k)$}}
    \psfrag{pi}[b]{{\footnotesize $\pi(k)$}}
    \psfrag{xb}[bl]{{\footnotesize $\hat y(k)$}}
    \psfrag{y}[c]{{\footnotesize $y(k)$}}
    \psfrag{C}[c]{\footnotesize $\mathcal{C}$}
    \psfrag{F}[c]{\footnotesize $\mathcal{F}$}
    \psfrag{x1}[c]{\footnotesize $\mathcal{S}_1$}
    \psfrag{x2}[c]{\footnotesize $\mathcal{S}_2$}
    \psfrag{xn}[c]{\footnotesize $\mathcal{S}_N$}
  \includegraphics[width=0.8\columnwidth]{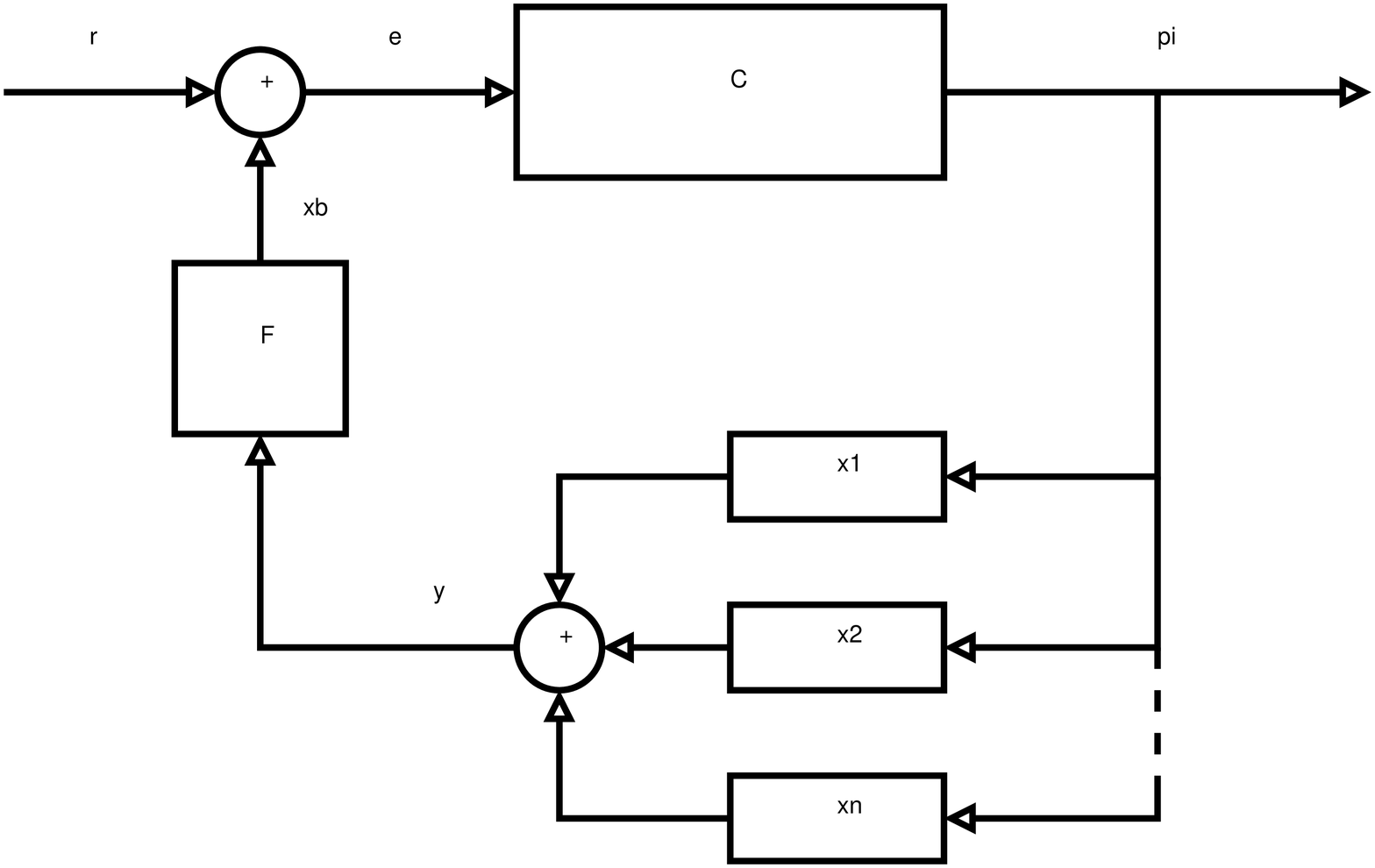}
  \caption{Feedback model.}\label{system}
  \end{psfrags}
\end{figure}
\fi

We consider a closed loop displayed in Figure \ref{system},
comprising a controller, a number of agents, and a filter, in discrete time.
A controller  $\mathcal{C}$
produces a signal $\pi(k)$ at time $k$.
In response, $N \in \N$ agents
modelled by systems $\mathcal{S}_1$, $\mathcal{S}_2$, \ldots, $\mathcal{S}_N$
amend their use of the resource.
%Note that the use $x_i(k)$ of agent $i$ at time $k$ is a random variable.
We model the use $x_i(k)$ of agent $i$ at time $k$ as a random variable,
where the randomness can be a result of the inherent randomness in the
reaction of user $i$ to the control signal $\pi(k)$, or the response to a
control signal that is intentionally randomized, or the
receipt of the control signal subject to random perturbations.
The aggregate resource utilisation $y(k) = \sum_{i = 1}^N x_i(k)$ at time $k$
is then also a random variable.
The controller does not have access to either $x_i(k)$ or $y(k)$,
but only to
the error signal $e(k)$, which is the difference of
$\hat y(k)$, the output of a filter $\mathcal{F}$,
and $r$, the desired value of $y(k)$. Even though this setup
is elementary, it represents many applications found in Smart Grid and Smart City applications.

Let us consider two simple examples of the structures
${\mathcal C} \, : \, (e, x_c) \mapsto \pi$ and
${\mathcal F} \, : \, (y, x_f) \mapsto \hat y$,
where $x_c \in X_C \subseteq \R^{n_c}$ is the internal state of the controller in dimension $n_c$,
      $x_f \in X_F \subseteq \R^{n_f}$ is the internal state of the filter in dimension $n_f$,
      and $n_c, n_f$ are non-negative integers:
%More formally one may also consider the situation described in the following example.
\begin{eg}[Linear setting]
    \label{linear}
In the linear setting, the controller dynamics may be:
\begin{equation} \label{eq_c}
{\mathcal C} ~:~ \left\{ \begin{array}{rcl}
x_c(k+1) & = & A^c x_c(k) + B^c e(k), \vspace{0.1cm} \\
\pi(k) & = & C^c x_c(k) + D^c e(k),
\end{array} \right.
\end{equation}
where $x_c \, : \, \N \to \R^{n_c}$ is the internal state of the controller in dimension $n_c$.
One could adopt a linear model for the filter ${\mathcal F}$, based on the classic IIR/FIR structures \cite{Oppenheim}.
Remembering that $y$ is a linear combination of the states, one has
\begin{equation}\label{eq_f}
{\mathcal F} ~:~ \left\{ \begin{array}{rcl}
x_f(k+1) & = & A^f x_f(k) + B^f y(k) + C^f \tilde y(k), \vspace{0.1cm} \\
\hat y(k) & = & D^f x_f(k),
\end{array} \right.
\end{equation}
where $\tilde y$ stores the previous $M$ values of $y$; that is, $\tilde y$ evolves by $\tilde y(k+1) = J y(k) + L \tilde y(k)$ with
\begin{equation}
\label{JL}
J = \left[ \begin{array}{c} 1  \\ 0 \\  \vspace{-0.12cm} 0 \\ \vdots \\ 0 \end{array} \right], \quad L = \left[ \begin{array}{ccccc} 0 & 0 & \cdots & 0 & 0  \\ 1 & 0 & \cdots & 0 & 0 \\\vspace{-0.12cm}  0 & 1 & \cdots & 0 & 0 \\ \vdots & \vdots & \ddots & \vdots & \vdots\\ 0 & 0 & \cdots & 1 & 0 \end{array} \right].
\end{equation}
\end{eg}

As a very special case, one could consider:

\begin{eg}
    \label{linear2}
A Proportional-Integral (PI) controller: % \cite{Franklin,Franklin_dig}:
\begin{equation}\label{pid1}
{\mathcal C} ~:~ \pi(k) = \pi(k-1) + \kappa \big[ e(k) - \alpha e(k-1) \big],
\end{equation}
with a state $x_c$ modelling one-element buffers for $e$ and $\pi$, and a moving-average filter:
\begin{equation} \label{fir}
{\mathcal F} ~:~
\hat y(k) = \frac{y(k) + y(k-1)}{2},
\end{equation}
with state $x_f$ modelling one-element buffer for $y$.
\end{eg}

Loosely speaking, our aim is for the controller ${\mathcal C}$ and filter ${\mathcal F}$ to make the long-run behaviour of the system acceptable,
for all possible distributions of the initial state, both from the perspective of the individual agents, and from the perspective of the macroscopic behaviour of the network of agents.
Somewhat more formally, we aim to find conditions such that for every stable linear controller $\mathcal{C}$ and every stable linear filter $\mathcal{F}$, the feedback loop regulates the aggregate resource use, and converges in distribution to a unique invariant measure.
The existence of a unique invariant measure may mean many things in practice: for example that agents get a fair share of the resource, or even that their time-averaged share is predictable in some sense. For example within Smart Grids, many demand-response management (DRM) schemes \cite{Hiskens2011}
consider interruptible loads (IL) \cite{192993}, such as in heating, ventilation, and air conditioning (HVAC) systems \cite{Alagoz2013,Salsbury2013}.
Specifically, one aims to regulate the total active power demand $r$ of the ILs,
while $x_i(k)$ determines the active power supplied to IL $i$ at time $k$.
Although continuous changes to the demand are possible, in theory,
many current schemes consider this binary notion of interruptibility \cite{192993}, in practice.
Then, it may desirable for an IL to have the same rate of interruptions as any other
IL with the same costs, independent of whether the IL was on or off at the start.
%Although in theory, one may allow for continuous changes to the demand \cite{6281419},
%Although many simple HVAC systems may allow for continuous changes to the fan speed \cite{6281419},
%more complex HVAC systems often require discrete decisions. %, e.g., on the number of trays in a heat exchanger.
%To continue with the Smart Grid example,
%notice that negative $x_i$ may be feasible due to distributed generation or
%vehicle-to-grid schemes \cite{5560848}, where power is supplied by electric cars' batteries.
% while the resource-use variable denotes whether car $i$ provides power to the grid.
%Although in principle, one may allow for an arbitrary draw down, realistically, there will be two set points (on, off).
Many researchers, e.g. \cite{5560848,Alagoz2013,Salsbury2013}, propose to use the classic
Proportional-Integral-Derivative (PID)  % \cite{Franklin,Franklin_dig}:
controllers in this setting. As we shall see, such controllers do not
guarantee the existence of the unique invariant measure
and it is not difficult to construct situations in which they fail to provide
agent-specific outcomes independent of the agent's initial state.

\section{Notation and Preliminaries}

\subsection{Markov Chains}

% {\bf Question: do we need a section to define Polish space, invariant measure etc.}

% {\color{blue}
% I would prefer not to. I would rather explain Markov chains, their state
% space, invariant measures and coupling.
% }

Let $\Sigma$ be a closed subset of $\R^n$ with the usual Borel
$\sigma$-algebra ${\cal B}(\Sigma)$. We call the elements of ${\cal B}$
events. A Markov chain on $\Sigma$ is a sequence
of ($\Sigma$-valued) random
vectors $\{ X(k)\}_{k\in\N}$ with the Markov property, that is the
probability of an event conditioned on past events is given by conditioning on the previous event, \emph{i.e.}, we always have
\begin{multline*}
    \prob(X(k+1) \in G \,|\, X(j)=x_{j}, \,  j=0, 1, \dots, k) \\ = \prob\big(X(k+1)\in G\,|\, X(k)=x_{k}\big),
\end{multline*}
where $G$ is an event
and $k \in \N$. We assume the Markov chain is time-homogeneous and the transition operator $P$ of the Markov chain is defined
by
\begin{equation*}
    P(x,G) := \prob(X(k+1) \in G \vert X(k) = x).
\end{equation*}
 If $X_0$ is distributed according to an initial distribution
$\lambda$ we denote by $\prob_\lambda$ the probability measure induced on
the space of sequences with values in $\Sigma$. Conditioned on an
initial distribution $\lambda$, the random variable $X(k)$ is distributed
according to the measure $\lambda_k$ which is given by
\begin{equation}
    \label{eq:measureiteration}
    \lambda_{k+1}(G) :=  \lambda_{k}P(G)   := \int_{\Sigma} P(x, G) \, \lambda_k(d x), \quad G
    \in {\cal B}.
\end{equation}
A measure $\mu$ on $\Sigma$ is
called invariant with respect to the Markov process $\{ X(k) \}$ if it is
a fixed point for the iteration described by \eqref{eq:measureiteration},
\emph{i.e.}, if $\mu P = \mu$.
The existence of attractive invariant measures is intricately linked to
ergodic properties of the system.

\subsection{Invariant Measures and Ergodicity}

For the systems of interest to us a particular type of Markov chains are
of interest: the so-called iterated function systems (IFS). In an iterated
function system we are given a set of maps $\{ f_j : \Sigma \to \Sigma
\vert j \in {\cal J} \}$, where ${\cal J}$ is an index set. Associated to
these maps there are probability functions $p_j: \Sigma \to [0,1]$ such
that
\begin{equation*}
    X(k+1) = f_j(X(k)) \quad \text {with probability } \quad p_j(X(k)).
\end{equation*}
It is, of course, required that $\sum_{j\in {\cal J}} p_j(x) = 1$ for
all $x \in \Sigma$.
%An important, but little known tool for guaranteeing the existence and
%uniqueness of an invariant measure arises from
% the study of {\em iterated function systems} (IFS).
%Iterated function systems are essentially stochastic switching systems which can be cast in a Markovian framework.
%A typical approach %in this community
%is to give conditions for the existence of a unique invariant measure in terms of average contractive properties of operators.
%The first uses of
%For IFS
Sufficient conditions for the existence of a unique attractive
invariant measure can be given in terms of
the central notion of ``average contractivity'', which
can be traced back to
\cite{elton1987ergodic,BarnsleyDemkoEltonEtAl1988,barnsley1989recurrent}:
%,
%who prove results of the form:
\begin{thm}[Barnsley et al. \cite{BarnsleyDemkoEltonEtAl1988}]
\label{Barnsley}
  Let $\Sigma \subset \R^n$ be closed.
  Consider an IFS with a finite index set ${\cal J}$,  and Lipschitz maps
  $f_j:\Sigma \to \Sigma$, $j\in {\cal J}$.
%  Let us have an index set $K$,
%  a family $H_k, k \in K$ of measurable functions $X \to X$,
  Assume that the probability functions $p_j$ are Lipschitz continuous and
  bounded below by $\eta > 0$.
%  and a family of real numbers $c_j > 0, j \in {\cal J}$, such that $\sum_{j \in {\cal J}} c_j = 1$.
  %Let $w_{i_n}: X \to X$ be independently identically distributed such that $\mathbb{P}(w_{i_n} = H_k) = c_k$ for all $k$.
  If there exists a $\delta>0$  such that for all $x, y \in \Sigma, x \not = y$
  \begin{align*}
    \sum_{j\in {\cal J}} p_j(x) \log \left( \frac{ \|{f_j (x) - f_j (y)}\| }{ \|x-y\| } \right) < -\delta < 0,
  \end{align*}
  then there exists an attractive (and hence unique) invariant measure
  $\mu$ for the IFS.
\end{thm}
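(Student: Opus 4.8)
The plan is to realise the invariant measure as the unique fixed point of the Markov operator acting on probability measures, and to extract existence, uniqueness and attractivity simultaneously from a single contraction estimate in a carefully chosen metric. Let $\mathcal{M}$ denote the space of Borel probability measures on $\Sigma$ (restricting to measures with finite moment when $\Sigma$ is unbounded), and recall from \eqref{eq:measureiteration} that the IFS induces the map $\mu \mapsto \mu P$, where $(\mu P)(G) = \int_\Sigma \sum_{j\in\mathcal J} p_j(x)\,\mathbf{1}_G(f_j(x))\,\mu(dx)$; an invariant measure is exactly a fixed point $\mu P = \mu$. If I metrise $\mathcal{M}$ by a \emph{complete} metric for which $\mu \mapsto \mu P$ is a strict contraction (possibly only after passing to a fixed iterate), then Banach's theorem delivers a unique $\mu$, and the contraction itself forces $\lambda_k \to \mu$ from every initial law $\lambda$, i.e. attractivity.

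First I would fix the right metric. The essential subtlety is that the hypothesis controls the $p_j(x)$-weighted \emph{geometric} mean of the local contraction ratios $r_j := \|f_j(x)-f_j(y)\|/\|x-y\|$ (the log-average), whereas the ordinary Wasserstein-$1$ distance is governed by their \emph{arithmetic} mean; by Jensen these differ, so a single map may expand while the ensemble still contracts on average. The device that reconciles them is the snowflake metric $\varrho_s(x,y) = \|x-y\|^s$ with a small exponent $s\in(0,1]$ (a genuine metric, since $t\mapsto t^s$ is concave and subadditive). Writing $g_{x,y}(s) = \sum_j p_j(x)\, r_j^{\,s}$, one has $g_{x,y}(0)=1$ and $g_{x,y}'(0) = \sum_j p_j(x)\log r_j \le -\delta$, which is precisely the hypothesis; hence $g_{x,y}(s) \le 1 - s\delta + o(s) < 1$ for small $s$, so the maps contract on average in $\varrho_s$. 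Since $s\mapsto g_{x,y}(s)$ is convex, the secant bound $g_{x,y}(s)\le 1+(s/s_0)(g_{x,y}(s_0)-1)$ shows it suffices to produce a \emph{single} exponent $s_0$ with $\sup_{x\neq y} g_{x,y}(s_0)\le\lambda<1$.

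With the metric fixed, I would estimate $d_{W,s}(\delta_x P,\delta_y P)$ using the explicit coupling that transports the mass $\min(p_j(x),p_j(y))$ from $f_j(x)$ to $f_j(y)$ and couples the residual arbitrarily, giving
\begin{equation*}
d_{W,s}(\delta_x P,\delta_y P) \;\le\; \Big(\sup_{x\neq y} g_{x,y}(s)\Big)\,\|x-y\|^s \;+\; \tfrac{1}{2}\,\mathrm{diam}_s(\Sigma)\sum_{j\in\mathcal J}\big|p_j(x)-p_j(y)\big|,
\end{equation*}
where the first term is the genuine contraction extracted above and the second is the error due to the place-dependence of the probabilities. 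Lipschitz continuity of the $p_j$ and finiteness of $\mathcal J$ bound this error by $\mathrm{const}\cdot\|x-y\|$, which is of higher order than $\|x-y\|^s$ as $x\to y$; the lower bound $\eta>0$, together with Lipschitz regularity, is what keeps this coupling well behaved and underlies uniqueness. Integrating the pointwise estimate against an optimal coupling of two arbitrary laws upgrades it to $d_{W,s}(\mu P,\nu P)\le \lambda\, d_{W,s}(\mu,\nu) + (\text{controlled error})$, from which a strict contraction on an iterate follows and Banach's theorem closes the argument.

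The step I expect to be the main obstacle is making the contraction \emph{uniform} in $(x,y)$: the hypothesis is an infinitesimal (first-order in $s$) statement, and promoting $g_{x,y}'(0)\le-\delta$ to a single $s_0$ with $\sup_{x\neq y}g_{x,y}(s_0)\le\lambda<1$ is delicate, because the second-order term $\sum_j p_j(x)(\log r_j)^2$ fails to be uniformly bounded when a map is strongly contracting ($r_j\to 0$). I would handle this by splitting $\mathcal J$ according to whether $r_j\le\varepsilon_0$ or $r_j>\varepsilon_0$, bounding the strongly contracting maps crudely by $r_j^{\,s}\le\varepsilon_0^{\,s}$ while Taylor-expanding only the moderate ones (on which $\log r_j$ is bounded), and exploiting compactness of $\Sigma$ (or a Lyapunov/tightness condition when $\Sigma$ is unbounded, which also secures existence of the fixed point). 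A technically cleaner alternative that avoids the measure-space contraction entirely is the pathwise route of Elton: invoke the strong law of large numbers along the chain to show that $\tfrac1n\log\|f_{j_1}\!\circ\cdots\circ f_{j_n}(x)-f_{j_1}\!\circ\cdots\circ f_{j_n}(y)\|$ tends to a negative limit almost surely, so backward orbits collapse exponentially to a point whose law is the invariant measure; there the place-dependence of the $p_j$ again forces the Lipschitz-and-$\eta$ hypotheses, now to justify the ergodic theorem for the chain.
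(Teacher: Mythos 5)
The paper itself offers no proof of this theorem: it is quoted from Barnsley, Demko, Elton and Geronimo, whose argument proceeds via equicontinuity of the iterates of the transfer operator $Uf(x)=\sum_j p_j(x)f(f_j(x))$ on continuous functions, together with an essential use of the lower bound $p_j\ge\eta$, not via a Banach contraction on the space of measures. So your proposal must be judged on its own merits. Its first half is sound: the snowflake/$s$-moment device converting the hypothesis $\sum_j p_j(x)\log r_j<-\delta$ into $\sup_{x\ne y}\sum_j p_j(x)r_j^{s_0}\le\lambda<1$ is standard and correct, and your splitting of $\mathcal{J}$ to handle strongly contracting maps is an adequate fix for the uniformity issue you flag. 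The genuine gap is in the second half: the probability-mismatch term is not a ``controlled error,'' and the claimed strict contraction (on any iterate) is false in general. Your one-step bound reads $d_{W,s}(\delta_x P,\delta_y P)\le\lambda\|x-y\|^s+\tfrac12 D^s\sum_j|p_j(x)-p_j(y)|$ with $D=\mathrm{diam}(\Sigma)$; since pointwise $\|x-y\|\le D^{1-s}\|x-y\|^s$, the best uniform consequence is a Lipschitz constant $\lambda+cL_pD$ for the Markov operator in the metric $d_{W,s}$, and nothing in the hypotheses makes $cL_pD<1-\lambda$. ``Higher order as $x\to y$'' is irrelevant, because Banach needs uniformity over all pairs; and iterating does not help, because the mass that decouples (applies different maps in the two chains) can land a distance $D$ apart, where it decouples again at the same rate -- closing the recursion reproduces the same constant $\lambda+cL_pD$, since the first moment is not contracted by the dynamics (only the $s$-moment is).

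The decisive evidence that no argument of this shape can work is that your proof never actually uses the hypothesis $p_j\ge\eta>0$ (you invoke it only rhetorically). Take $\Sigma=[0,1]$, $f_1(x)=x/2$, $f_2(x)=(x+1)/2$, $p_1(x)=1-x$, $p_2(x)=x$. The probabilities are Lipschitz, both maps have ratio exactly $\tfrac12$, so the log-average condition holds uniformly with $\delta$ arbitrary up to $\log 2$; every hypothesis your estimates use is satisfied. Yet the Dirac measures at $0$ and at $1$ are both invariant (at $x=0$ the map $f_1$ is applied with probability one, and symmetrically at $x=1$), so there is no unique invariant measure, and hence no complete metric on probability measures in which $P$ or any iterate of $P$ is a strict contraction. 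The lower bound $\eta$ must therefore enter the analysis in an essential, quantitative way, which is exactly what your treatment of the mismatch term fails to do: in the reference the mismatch errors are summed along coupled orbits whose mutual distances decay geometrically (this is where the Dini/Lipschitz modulus gets integrated), and the lower bound is what forces the limits from different initial conditions to agree. Your fallback route (backward iteration \`a la Letac/Elton) has the same defect as stated: with place-dependent probabilities the sequence of applied maps is not i.i.d., so backward iterates need not converge, and making that route rigorous again requires the $\eta$-bound through an ergodic-theoretic argument, not merely ``to justify the ergodic theorem.''
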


We can combine Theorem~\ref{Barnsley} with an ergodic theorem by Elton,
\cite{elton1987ergodic}, to obtain that for all (deterministic) initial conditions $x \in
\Sigma$ the limit
% I did not understand at all, what this was supposed to mean:
%$\lim_{n \to \infty}w_{i_n} ( \ldots w_{i_1} (x) \ldots )$
\begin{equation}
\label{eq:ergodicprop}
   \lim_{k \to\infty} \frac{1}{k+1} \sum_{\nu=0}^k  X(\nu) = \mathbb{E}(\mu)
\end{equation}
exists almost surely ($\prob_{x_0}$)  and is
independent of $x_0 \in \Sigma$. The limit is given by the expectation
with respect to the invariant measure $\mu$.
For more general theorems,
the reader is referred to  recent surveys \cite{iosifescu2009iterated,stenflo2012survey}. For one of the first uses of IFS in the control community, see \cite{Branicky1994,Branicky1998}.\newline

More generally, an invariant measure $\mu$ is called ergodic for a Markov
process, if for $\mu$-almost all initial conditions \eqref{eq:ergodicprop}
holds almost surely.

\begin{rem}
    From the point of view of applications in smart cities, such an
    ergodic property should be a minimum requirement. We want to avoid
    situations, where the average allocation of resources to agents depends
    on their initial conditions, on possible initial conditions of
    controllers and filters, etc. In addition, it is desirable to shape
    the expected value so that an overall optimum is obtained.
    %We only
    %mention the latter problem, but in this paper we concentrate on the
    %first requirement.
\end{rem}

With this background, our general problem considered in this paper is modelled as a Markov chain on a state space
representing all the system components. We thus let $X_S = \{ (x_i) \}$
be the set of vectors representing the possible values for the agents. The
spaces $X_F,X_C$ contain the possible internal states for filter and
central controller. Our system thus evolves on the state space $\Sigma :=
X_S \times X_C \times X_F$.

% For more general theorems,
% the reader is referred to  recent surveys \cite{iosifescu2009iterated,stenflo2012survey}. For first uses in the control community, see \cite{Branicky1994,Branicky1998}. Note that, the conditions presented therein are sufficient for asymptotic convergence, and hence cannot be used in proving the converse  statement.

\subsection{Ergodic Invariant Measures and Coupling}

An important observation is that coupling arguments provide
 criteria for the non-existence of a unique invariant measure.
Coupling arguments have been used since the theorem of Harris \cite{harris1960lower,lindvall2012lectures}, and
are hence sometimes known as Harris-type theorems.
Generally, they link the existence of a coupling with
the forgetfulness of initial conditions.

%\begin{dfn}[Lindvall \cite{lindvall2012lectures}]
%    A coupling of probability measures $M$ and $M'$ on $(E, \mathscr{S})$,
%    is a probability measure $\hat M$ on $(E^2, \mathscr{S}^2)$
%    such that $M = \hat M \pi^{-1}$ and $M' = \hat M \pi'^{-1}$
%    where $\pi(x, x') = x$, $\pi'(x,x') = x'$ for $(x, x') \in E^2$.
%    We call $M, M'$ the marginal distributions.
%    We call a coupling successful, if the coupling occurrs within finite time.
%\end{dfn}

To formalise these notions, let us denote the
the space of trajectories of a $\Sigma$-valued Markov chain $\{ X(k)\}_{k\in\N}$,
\emph{i.e.}, the space of all
sequences $(x(0),x(1),x(2),\ldots)$ with $x(k) \in \Sigma$, ${k\in\N}$,
by $\Sigma^\infty$ (the ``path space'').
The measure space over $X^\infty$
is denoted by $M(\Sigma^\infty)$.
Recall, for example, $\prob_\lambda \in M(\Sigma^\infty)$,
the probability measure induced on
the path space by the initial distribution $\lambda$ of $X_0$.

A coupling of two measures $P_{\mu_1},P_{\mu_2} \in M(\Sigma^\infty)$ is a measure on $\Sigma^\infty\times \Sigma^\infty$ whose marginals coincide with $P_{\mu_1}, P_{\mu_2}$.
To be precise,
consider $\Gamma \in M(\Sigma^\infty\times \Sigma^\infty)$, i.e., a measure over the product of the two path spaces.
Clearly, $\Gamma$ can be projected to the space of measures over one or the other path space $\Sigma^\infty$; we denote the projectors $\Pi^{(1)} \Gamma$ and $\Pi^{(2)} \Gamma$.
% JAKUB: This is clear as mud, and I don't think we need to be that anal. Projection is a widely understood operation.
%with the first marginal
%$\Pi^{(1)} \Gamma \in M(X^\infty)$ by setting
%\begin{equation*}
%    \Pi^{(1)} \Gamma (A) = \Gamma(A \times X^\infty).
%\end{equation*}
%Similarly, the second marginal $\Pi^{(2)} \Gamma \in M(X^\infty)$ is defined
%through $\Pi^{(2)} \Gamma (A) = \Gamma(X^\infty \times A)$.
The set $C(P_{\mu_1}, P_{\mu_1})$  of couplings
of $P_{\mu_1}, P_{\mu_2} \in M(\Sigma^\infty)$ is then defined by
\begin{align*}
    \{
\Gamma \in M(\Sigma^\infty \times \Sigma^\infty) \; : \;
\Pi^{(1)} \Gamma = P_{\mu_1}, \Pi^{(2)} \Gamma  = P_{\mu_2}
\}.
\end{align*}
We say that a coupling $\Gamma$ is an asymptotic coupling if $\Gamma$ has
full measure on the pairs of convergent sequences. To make this precise
consider the following set denoted ${\cal D}$:
\begin{align*}
    \left \{ (x_1, x_2) \in \Sigma^\infty \times \Sigma^\infty \; : \; \lim_{k\to \infty} \left\| x_1(k) - x_2(k) \right\| =0
    \right \}
\end{align*}
$\Gamma$ is an asymptotic coupling if $\Gamma({\cal D}) = 1$.
The following statement if a specialization of \cite[Theorem 1.1]{hairer2011asymptotic}
to our situation:

\begin{thm}[Hairer et al. \cite{hairer2011asymptotic}]
\label{coupling-argument}
Let $P$ be a Markov operator admitting two ergodic invariant measures $\mu_1$ and $\mu_2$. The following are equivalent:
\begin{enumerate}
\item[(i)] $\mu_1 = \mu_2$.
\item[(ii)] There exists an asymptotic coupling of $P_{\mu_1}$ and $P_{\mu_2}$.
\end{enumerate}
\end{thm}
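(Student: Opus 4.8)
The plan is to prove the two implications separately, with the reverse implication (ii) $\Rightarrow$ (i) carrying the real content.

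For (i) $\Rightarrow$ (ii) I would exhibit an explicit coupling. Assuming $\mu_1 = \mu_2 =: \mu$, the induced path measures coincide, $P_{\mu_1} = P_{\mu_2} = P_\mu$. Let $\Gamma$ be the pushforward of $P_\mu$ under the diagonal embedding $x \mapsto (x,x)$ of $\Sigma^\infty$ into $\Sigma^\infty \times \Sigma^\infty$. Both projections $\Pi^{(1)}\Gamma$ and $\Pi^{(2)}\Gamma$ return $P_\mu$, so $\Gamma$ is a coupling; moreover it is supported on the diagonal, where $x_1(k) = x_2(k)$ for every $k$, hence on the set $\mathcal{D}$. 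Thus $\Gamma(\mathcal{D}) = 1$ and $\Gamma$ is an asymptotic coupling.

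For (ii) $\Rightarrow$ (i), the strategy is to test the two measures against a rich enough class of functions and show they agree. Fix a bounded Lipschitz function $\varphi : \Sigma \to \R$. Since $\mu_1$ is ergodic, the ergodic theorem (cf. \eqref{eq:ergodicprop}) guarantees that for $P_{\mu_1}$-almost every trajectory the time averages $\frac{1}{k+1}\sum_{\nu=0}^k \varphi(x_1(\nu))$ converge to the constant $\int_\Sigma \varphi \, d\mu_1$; symmetrically, the averages along the second coordinate converge to $\int_\Sigma \varphi \, d\mu_2$ for $P_{\mu_2}$-a.e. trajectory. Because $\Pi^{(1)}\Gamma = P_{\mu_1}$ and $\Pi^{(2)}\Gamma = P_{\mu_2}$, each of these full-measure events pulls back to a $\Gamma$-full-measure event on the corresponding coordinate. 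I would then exploit that $\Gamma$ is asymptotic: on $\mathcal{D}$, which has full $\Gamma$-measure, $\|x_1(k) - x_2(k)\| \to 0$, so Lipschitz continuity of $\varphi$ gives $|\varphi(x_1(k)) - \varphi(x_2(k))| \to 0$, and a Cesàro argument forces the averaged difference $\frac{1}{k+1}\sum_{\nu=0}^k \big[ \varphi(x_1(\nu)) - \varphi(x_2(\nu)) \big]$ to vanish as $k \to \infty$. Intersecting the three full-measure events and passing to the limit yields $\int_\Sigma \varphi \, d\mu_1 = \int_\Sigma \varphi \, d\mu_2$. As bounded Lipschitz functions are measure-determining on the closed set $\Sigma \subset \R^n$, I conclude $\mu_1 = \mu_2$.

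The main obstacle is the bookkeeping that transfers the two separate almost-sure ergodic statements onto the single coupling measure $\Gamma$: one must verify that ergodicity of each $\mu_i$ (as an invariant measure for $P$) indeed yields constant Birkhoff limits equal to $\int_\Sigma \varphi \, d\mu_i$ — equivalently, that the shift on the stationary path space is ergodic — and that the marginal identities $\Pi^{(i)}\Gamma = P_{\mu_i}$ allow the three exceptional null sets to be discarded simultaneously under $\Gamma$. Once the three events of full $\Gamma$-measure are secured, the remaining analysis is the elementary Cesàro limit.
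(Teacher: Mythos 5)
Your proposal is correct, but there is nothing in the paper to compare it against: the paper states Theorem~\ref{coupling-argument} purely by citation, as a specialization of Theorem~1.1 of Hairer, Mattingly and Scheutzow \cite{hairer2011asymptotic}, and gives no proof of its own. What you have written is, in substance, the proof from that reference: the diagonal pushforward settles (i) $\Rightarrow$ (ii), and for (ii) $\Rightarrow$ (i) one tests both measures against bounded Lipschitz functions, applies the Birkhoff ergodic theorem on each marginal, and uses the full-measure event $\mathcal{D}$ together with the Ces\`aro argument to force the two Birkhoff limits to coincide; since bounded Lipschitz functions separate Borel probability measures on the closed set $\Sigma \subset \R^n$, equality of the measures follows. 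The one step you rightly flag as the real bookkeeping — that ergodicity of $\mu_i$ as an invariant measure of the Markov operator $P$ makes the shift on the stationary path space $(\Sigma^\infty, P_{\mu_i})$ ergodic, so that the time averages are $P_{\mu_i}$-a.s.\ equal to the constant $\int_\Sigma \varphi \, d\mu_i$ — is indeed a standard fact, but note that it requires the standard notion of ergodicity (extremality among invariant measures, equivalently triviality of the shift-invariant $\sigma$-algebra), which is the notion used in \cite{hairer2011asymptotic}. The paper's own working definition, namely that the state averages in \eqref{eq:ergodicprop} converge for $\mu$-a.e.\ initial condition, is weaker on its face and would not by itself license your application of Birkhoff's theorem to arbitrary bounded Lipschitz $\varphi$; under the standard definition, your argument is complete.
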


Consequently, if no asymptotic coupling of $P_{\mu_1}$ and $P_{\mu_2}$ exists,
then $\mu_1$ and $\mu_2$ are distinct.

% JAKUB: This is what Bob started out on yesterday. I am not quite sure about it
%We will use this result in the guise of the following criterion for the
%nonexistence of a unique ergodic invariant measure.
%We call a set
%$Z\subset X$ forward invariant under an iterated function system of the
%form \eqref{eq:1}, \eqref{eq:2}, if ****
%\begin{cor}
%    Let $X\subset \R^n$ and consider an iterated function system on $X$ given by
%    \eqref{eq:1}, \eqref{eq:2}. If there are two sets $Z_1,Z_2 \subset X$
%    which are each forward invariant under the Markov chain and such that
%    \begin{equation}
%        \label{eq:4}
%        \dist \left(Z_1, Z_2 \right) > 0
%    \end{equation}
%    then the Markov chain does not have a unique invariant measure $\mu$.
%\end{cor}

\section{The Existence of a Unique Invariant Measure}

%We now first present examples of situations where a unique
%invariant measure exists.
Let us consider the very simple setting of Example \ref{linear}
and show that the unique invariant measure exists:

\begin{thm} \label{thm01}
Consider the feedback system depicted in Figure \ref{system}, with ${\mathcal C}$ and ${\mathcal F}$ given in (\ref{eq_c}) and (\ref{eq_f}). %\eqref{eq:4:2}.
Assume that each agent  %$\mathcal{S}_i$,
 $i \in \{1,\cdots,N\}$ has state $x_i(k)$ governed by the following affine stochastic difference equation:
\begin{equation}
x_i(k+1) = w_{ij}(x_i),
\end{equation}
where the affine mapping $w_{ij}$ is chosen at
each step of time according to a Dini-continuous
probability function $p_{ij}(x_i, q(k))$, out of $w_{ij}(x_i) \defeq A_i x_i + b_{ij},$
where $A_i$ is a Schur matrix and for all $i$, $q(k)$, $\sum_j p_{ij}(x_i, q(k)) = 1$. In addition,
suppose that there exist scalars $\delta_i > 0$ such that $p_{ij}(x_i,\pi)
\geq \delta_i > 0$; that is, the probabilities are bounded away from
zero. Then, for every stable linear controller $\mathcal{C}$ and every
stable linear filter $\mathcal{F}$, the feedback loop converges in
distribution to a unique invariant measure.
\end{thm}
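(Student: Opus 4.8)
The plan is to rewrite the entire closed loop as a single iterated function system on the state space $\Sigma$, which we take to be the closed set $\R^n$ so that the affine maps are self-maps, and then verify the hypotheses of Theorem~\ref{Barnsley}. Collect the full state into $X(k) = (x_1(k),\dots,x_N(k),\tilde y(k), x_f(k), x_c(k))$. For each choice of the offset tuple $j=(j_1,\dots,j_N)$, the one-step map is affine,
\begin{equation*}
F_j(X) = \mathcal{A}\,X + \mathbf{b}_j,
\end{equation*}
where the agent blocks contribute $x_i \mapsto A_i x_i + b_{ij_i}$ and the filter, buffer, and controller blocks are the deterministic affine recursions obtained by substituting $y(k)=\sum_i x_i(k)$, $\hat y(k) = D^f x_f(k)$, and $e(k) = r - \hat y(k)$ into \eqref{eq_c}--\eqref{eq_f}. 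Since the index set is the finite product $\mathcal{J} = \prod_{i} \mathcal{J}_i$ and the control signal $q(k)=\pi(k)$ is a deterministic (affine) function of $X(k)$, the selection probabilities $p_j(X) = \prod_i p_{ij_i}(x_i, q(X))$ define a state-dependent, time-homogeneous Markov chain; they inherit Dini-continuity from the $p_{ij}$ and are bounded below by $\prod_i \delta_i > 0$.

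The crucial structural observation is that the linear part $\mathcal{A}$ is the same for every $j$, so it suffices to control $\mathcal{A}$ once. Ordering the blocks as above makes $\mathcal{A}$ block lower-triangular: the agent dynamics decouple into the blocks $A_i$; the buffer $\tilde y$ feeds back only through the nilpotent shift $L$; the filter block carries $A^f$; and the controller block carries $A^c$, with the off-diagonal couplings ($J$, $B^f$, $C^f$, $-B^c D^f$) sitting strictly below the diagonal. Hence the spectrum of $\mathcal{A}$ is the union of the spectra of $A_1,\dots,A_N,L,A^f,A^c$. Each $A_i$ is Schur by assumption, $L$ is nilpotent, and stability of $\mathcal{C}$ and $\mathcal{F}$ makes $A^c,A^f$ Schur; therefore $\rho(\mathcal{A})<1$.

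Next I would verify the average-contractivity inequality of Theorem~\ref{Barnsley}. Since $F_j(x)-F_j(y)=\mathcal{A}(x-y)$ for every $j$, the weighted log-sum collapses, using $\sum_j p_j(x)=1$, to
\begin{equation*}
\sum_{j} p_j(x)\,\log\!\left(\frac{\|F_j(x)-F_j(y)\|}{\|x-y\|}\right) = \log\!\left(\frac{\|\mathcal{A}(x-y)\|}{\|x-y\|}\right),
\end{equation*}
independently of the probabilities. The subtlety --- and the main obstacle --- is that $\rho(\mathcal{A})<1$ does not by itself bound this quantity below zero in the Euclidean norm, since $\|\mathcal{A}\|$ may exceed one. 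I would resolve this by passing to an adapted norm: because $\mathcal{A}$ is Schur, Lyapunov's theorem provides $P\succ 0$ with $\mathcal{A}^{\T} P \mathcal{A} \prec P$, and the induced norm $\|z\|_*=(z^{\T}Pz)^{1/2}$ satisfies $\|\mathcal{A}\|_*<1$. Working in $\|\cdot\|_*$ (which is equivalent to the Euclidean norm and leaves the Borel structure and convergence in distribution unchanged), the displayed expression is at most $\log\|\mathcal{A}\|_* < 0$ for all $x\neq y$; the hypothesis therefore holds uniformly with $\delta \defeq -\log\|\mathcal{A}\|_* > 0$.

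It then remains to apply Theorem~\ref{Barnsley} to obtain the attractive, and hence unique, invariant measure $\mu$; attractivity yields convergence in distribution from every initial condition, which is the claim. The one loose end is the regularity of the $p_j$: Theorem~\ref{Barnsley} is stated for Lipschitz probabilities, whereas here they are merely Dini-continuous. I would close this gap by invoking the Dini-continuous refinement of the average-contractivity theorem recorded in the surveys \cite{iosifescu2009iterated,stenflo2012survey}, noting in particular that the contractivity estimate above never used the regularity of the $p_j$, which enters only in guaranteeing uniqueness and attractivity.
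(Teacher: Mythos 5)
Your proposal is correct, and its skeleton coincides with the paper's own proof: rewrite the closed loop as an iterated function system on an augmented state, observe that all maps share the same linear part $\mathcal{A}$, use the block lower-triangular structure to conclude that $\sigma(\mathcal{A})$ is the union of the spectra of the $A_i$, the nilpotent shift $L$, $A^f$ and $A^c$ (hence $\rho(\mathcal{A})<1$), bound the selection probabilities below by $\prod_i \delta_i>0$, and invoke a Barnsley--Demko--Elton-type average-contractivity theorem. The one genuine divergence is the contractivity step, and it is worth recording. The paper fixes an arbitrary induced norm, concludes only that $\|\mathcal{A}^m\|<1$ for some sufficiently large $m$ (``contraction on average after a finite number of steps''), and then appeals to an unstated ``variant'' of Theorem~\ref{Barnsley}; making that fully rigorous requires either a version of the theorem for eventually contractive systems or an argument transferring invariance and attractivity from the $m$-step sampled chain back to the original chain, neither of which the paper spells out. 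Your Lyapunov-norm device sidesteps this entirely: $\mathcal{A}^{\T}P\mathcal{A}\prec P$ with $P\succ 0$ gives $\|\mathcal{A}\|_*<1$ in the equivalent norm $\|z\|_*=(z^{\T}Pz)^{1/2}$, the weighted log-sum collapses (because all maps have the same linear part) to $\log\left(\|\mathcal{A}(x-y)\|_*/\|x-y\|_*\right)\le\log\|\mathcal{A}\|_*<0$ uniformly in $x\neq y$, and Theorem~\ref{Barnsley} applies in one step; since the existence of an attractive invariant measure is unaffected by passing to an equivalent metric, nothing is lost. What the paper's route buys is indifference to the choice of norm; what yours buys is that the cited theorem is applied exactly as stated, which is the cleaner and more self-contained argument. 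Your leaner state vector (substituting out $y$, $\hat y$, $e$ and $q$ rather than carrying them as extra coordinates contributing zero eigenvalues, as the paper does) is an inessential but tidy simplification. Finally, both proofs close the Lipschitz-versus-Dini mismatch by citation --- the paper via Corollary 2.3 of \cite{BarnsleyDemkoEltonEtAl1988}, you via the refinements recorded in \cite{iosifescu2009iterated,stenflo2012survey} --- so on that point the two arguments are on equal footing.
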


\begin{proof}
Following \cite{BarnsleyDemkoEltonEtAl1988}, the proof is centred at the construction of an iterated function system (IFS) with place-(state-)dependent probabilities that describes the feedback system. To this end, consider the augmented state $$\xi \defeq [x'~,~y~,~\tilde y~,~z_f'~,~\hat y~,~e~,~z_c'~,~q]'$$, whose dynamic behaviour is described by the difference equation
%where $x$ collects the state variables from all subsystems $x \defeq [x_1',~x_2',\ldots,~x_N]'$, $\tilde{x}$ stores the previous $m$ values of $x$, $x_f$ and $\hat{x}$ are the state variable from the linear filter $\mathcal{F}$ and its output, respectively, $e$ is the error signal $e = r - \hat{x}$, $x_c$ is the state variable from the linear controller $\mathcal{C}$ and $\pi$ is the output signal from the controller, used to affect the probabilities $p_{ij}(x_i,\pi)$. In this case, it is possible to describe the feedback system by
\begin{equation}
\label{dynamical}
\xi(k+1) = w_\ell(x) \defeq \mathcal{A}\xi(k) + b_\ell,
\end{equation}
where $\mathcal{A}$ is the matrix
\begin{equation}
\notag
\Scale[0.8]{
\begin{bmatrix}
\hat{A} &  &  &  &  &  &  & \\
\mathbf{1}'\hat A & 0 &  &  &  &  &  & \\
0 & J & L &  &  &  &  &  \\
0 & B^f & C^f & A^f &  &  &  &  \\
0 & D^f B^f & D^f C^f & D^f A^f & 0 & & & \\
0 & -D^f B^f & -D^f C^f & -D^f A^f & 0 & 0 & & \\
0 & 0 & 0 & 0 & 0 & B^c & A^c & & \\
0 & -D^c D^f B^f & - D^c D^f C^f & -D^c D^f A^f & 0 & C^c B^c & C^c A^c & 0
\end{bmatrix}},
\end{equation}
where $J$ and $L$ are from \eqref{JL}, $\hat A \defeq \mathbf{diag}(A_i)$,
and $b_\ell$ is built from all the combinations of the vectors $b_{ij}$
and other signals. To apply Corollary 2.3 from
\cite{BarnsleyDemkoEltonEtAl1988}, two observations must be made. First,
note that each map $w_\ell$ is chosen with probability $p_\ell(\xi) \geq
\prod_{i=1}^N \delta_i > 0$ and, thus, they are bounded away from
zero. Second, since $\sigma({\mathcal A}) = \sigma(\hat A) \cup \sigma(L)
\cup \sigma(A^f) \cup \sigma(A^c) \cup \{0\}$ and, by hypothesis, $A_i$,
$A^f$ and $A^c$ are Schur matrices, then for any induced matrix norm
$\|\cdot\|$ there exists an $m \in \N$ sufficiently large such that
$\|{\mathcal A}^m\| < 1$. This provides the desired contraction on average
(after a finite number of steps). The result then follows from a variant
of Theorem~\ref{Barnsley}, see \cite{BarnsleyDemkoEltonEtAl1988}. The
proof is complete.
\end{proof}

\parindent0mm Some comments on the result of Theorem \ref{thm01} are in order.

\begin{rem}
Dini's condition on the probabilities may, obviously, be replaced by simpler, more conservative assumptions, such as Lipschitz or H\"older conditions \cite{BarnsleyDemkoEltonEtAl1988}.
\end{rem}

\begin{rem}
As we shall see, the requirement $p_{ij}(x_i,\pi) \geq \delta_i > 0$ in the theorem statement is not an artefact of our analysis, as $p_{ij}(x_i, \pi) = 0$ may lead to a non-ergodic behaviour.% A simple way to solve the pathological behaviour presented by the examples in Section \ref{sec03} is to replace the switched controller modify both controllers by adding a constant term $\epsilon \in (0,0.5)$. This constant bounds the probabilities from zero and, thus, both closed-loop systems satisfy the assumptions of
\end{rem}

We should also explicate:

\begin{prp}
Under the assumptions of Theorem~\ref{thm01}: 
The case of $x_i \in \{ 0, 1\}$ is obtained by setting the $A_i$ to zero
and by introducing $b_{i0}=0$ and $b_{1i}=1$.
\end{prp}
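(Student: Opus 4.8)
The plan is to treat this as a verification that the stated choices of parameters specialise the affine agent model of Theorem~\ref{thm01} to a binary on/off model, and that every hypothesis of Theorem~\ref{thm01} survives the specialisation, so that its conclusion transfers unchanged. First I would examine the agent maps. Setting $A_i = 0$ collapses each affine map $w_{ij}(x_i) = A_i x_i + b_{ij}$ to the constant map $w_{ij} \equiv b_{ij}$, so the two offsets $b_{i0} = 0$ and $b_{i1} = 1$ yield exactly the two maps $w_{i0} \equiv 0$ (the ``off'' map) and $w_{i1} \equiv 1$ (the ``on'' map). Their images lie in $\{0,1\}$, so irrespective of the initial condition $x_i(0)$ one has $x_i(1) \in \{0,1\}$ after a single step, and the two-point set $\{0,1\}$ is forward invariant under the iterated function system. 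This recovers the binary state space for each agent, while the filter and controller coordinates of the augmented state $\xi$ remain real-valued; the resulting product space is still a closed subset of the appropriate $\R^n$, as Theorem~\ref{Barnsley} requires.

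Next I would check the hypotheses of Theorem~\ref{thm01} one at a time. The zero matrix $A_i = 0$ has spectral radius $0 < 1$ and is therefore Schur, so the structural assumption on the agents holds. The probability functions $p_{ij}(x_i, q(k))$ are untouched by the reduction; in particular they remain Dini-continuous and bounded below by $\delta_i > 0$, so the probability bound $p_\ell(\xi) \ge \prod_{i=1}^N \delta_i > 0$ from the proof of Theorem~\ref{thm01} is unaffected. For the contraction-on-average step, I would note that the diagonal block $\hat A \defeq \mathbf{diag}(A_i)$ of the augmented matrix $\mathcal{A}$ is now the zero matrix, so $\sigma(\hat A) = \{0\}$; the spectral decomposition $\sigma(\mathcal{A}) = \sigma(\hat A) \cup \sigma(L) \cup \sigma(A^f) \cup \sigma(A^c) \cup \{0\}$ therefore still consists only of eigenvalues inside the unit disk, and the bound $\|\mathcal{A}^m\| < 1$ for $m$ large goes through exactly as before.

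The one point deserving care, which I would flag as the (mild) main obstacle, is that making the agent maps constant degenerates the literal pointwise inequality of Theorem~\ref{Barnsley}: for $x \neq y$ the log-ratio $\log(\|w_{ij}(x) - w_{ij}(y)\|/\|x - y\|)$ becomes $-\infty$ on the agent coordinates, since $w_{ij}(x) = w_{ij}(y)$. I would emphasise that this is harmless, because the proof of Theorem~\ref{thm01} does not invoke that pointwise inequality directly but instead establishes average contractivity through the finite-iterate operator-norm bound $\|\mathcal{A}^m\| < 1$ on the augmented system, which remains valid under $A_i = 0$ (indeed, the vanishing agent block only improves the contraction). With every hypothesis thus verified, Theorem~\ref{thm01} applies to the binary system and delivers a unique invariant measure, which completes the argument.
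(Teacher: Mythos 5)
Your proof is correct and matches the paper's intent: the paper states this proposition without any explicit proof, treating it as an immediate specialisation of Theorem~\ref{thm01}, and your verification (constant maps $w_{i0}\equiv 0$, $w_{i1}\equiv 1$ making $\{0,1\}$ forward invariant, the zero matrix being Schur, the probability bounds and the contraction estimate $\|\mathcal{A}^m\|<1$ untouched) is exactly the argument the paper leaves implicit. The concern you flag about the degenerate log-ratio is indeed a non-issue — constant agent maps only make the average-contractivity condition of Theorem~\ref{Barnsley} hold more strongly, since $-\infty < -\delta$ — and you correctly read the statement's $b_{1i}=1$ as the typo it is, namely $b_{i1}=1$.
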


\begin{rem}Finally, considering that Theorem \ref{Barnsley} does not require linearity,
it is clear that one can extend the results to non-linear systems, as we do
in a follow-up of this paper.
\end{rem}

\section{Switched Controllers}

Next, let us consider negative results.
Switched controllers are widely adopted by designers in practical applications that span several areas of engineering \cite{Shorten07}. The gain in flexibility provided by these controllers makes them suitable candidates for some smart cities problems.
In this section, consider the following structure for the controller ${\mathcal C}$
\begin{equation} %\label{eq_c} \notag
{\mathcal C} ~:~ \left\{ \begin{array}{rcl}
x_c(k+1) & = & A_{c\sigma(k)} x_c(k) + B_{c\sigma(k)} e(k), \vspace{0.1cm} \\
\pi(k) & = & C_{c\sigma(k)} x_c(k) + D_{c\sigma(k)} e(k),
\end{array} \right.
\end{equation} 
where, once again, $x_c\in \R^{n_c}$ is its internal state and $\sigma \,
: \, \N \to [n_s] \defeq \{1,\cdots,n_s\}$ is the {\em switching signal},
which, at each time step, assigns one of the $n_s$ constituent systems $(A_{ci}, B_{ci}, C_{ci}, D_{ci})$, $i \in [n_s]$. Two complementary behaviours for the switching function are usually considered whenever one is analysing switched systems: switching control and switching perturbation.

\begin{eg}[Switched controller I]
Let us consider a simple example consisting of two agents ${\mathcal S}_1$ and ${\mathcal S}_2$ with only one common resource that must be shared, which means $r = 1$. As before, let us denote by $x_1$ and $x_2$ the agents states, which indicate whether each agent has access to the common resource or not; that is, $x_i \in \{0,1\}$, for $i \in \{1,2\}$. For simplicity, consider ${\mathcal F} = 1$ and, thus, $\hat{y} = y = x_1 + x_2$. Once again, each agent's behaviour is affected by the broadcast pricing signal $\pi$, which is computed by the controller ${\mathcal C}$. Let us assume each agent ${\mathcal S}_i$ switches its current state $x_i$ with probability
\begin{equation} \label{prb}
\mathbb{P}(x_i(k+1) = j ~|~ x_i(k) \neq j) = \pi,
\end{equation}
for all $k \in \N$. Given the identity above, in an attempt to adequately control the agents stochastic behaviour, the switched controller
\begin{equation}
  {\mathcal C} ~ : ~ \pi(k) = \frac{1}{2} |e(k)|, ~ \forall k \in \N.
\end{equation}
seems to be a suitable candidate. Under the action of this controller, the
closed-loop system depicted in Figure \ref{system} corresponds to the
Markovian process whose transitions are represented in Figure
\ref{markov_ne}, where the chain modes are $(x_1,x_2)$. As this Markov
chain presents two different stationary states, the process is not
ergodic; see \cite{Leon_Garcia}. Therefore, under mild conditions, it may
happen that $x_1$ will never get access to the resource.
%Therefore, this example violates \eqref{E1}, without necessarily violating \eqref{E2} and \eqref{E3}.

\ifpdf

\begin{figure}
\centering
\includegraphics[width=0.75\columnwidth,clip=true, trim=0cm 0cm 0cm 0cm]{transitions}
\caption{Closed-loop transitions.}\label{markov_ne}
\end{figure}

\else

\begin{figure}
  \centering
  \begin{psfrags}
    \psfrag{00}[c]{\footnotesize $~(0,0)$}
    \psfrag{01}[c]{\footnotesize $~(0,1)$}
    \psfrag{10}[c]{\footnotesize $~(1,0)$}
    \psfrag{11}[c]{\footnotesize $~(1,1)$}
    \psfrag{q}[c]{\footnotesize $\frac{1}{4}$}
    \psfrag{p}[c]{\footnotesize $1$}
  \includegraphics[width=0.6\columnwidth]{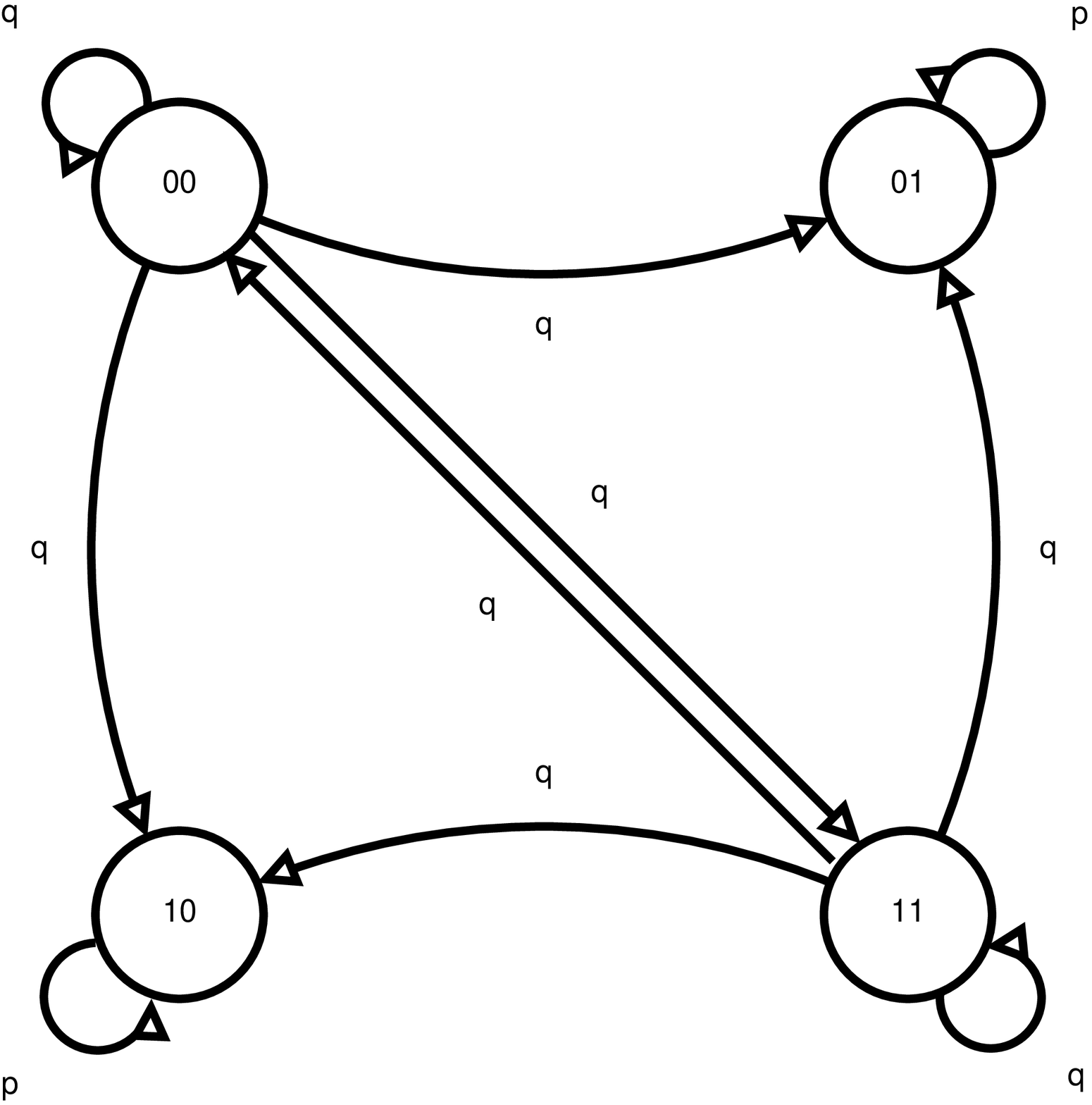}
  \caption{Closed-loop transitions.}\label{markov_ne}
  \end{psfrags}
\end{figure}

\fi

\end{eg}

%\subsection{A Switched Controller}
\begin{eg}[Switched controller II]
Inspired by the previous example, let us now consider a more comprehensive
case. Consider the problem where $r = 50$ units of a certain resource must
be shared among $N = 100$ agents. As before, each agent ${\mathcal S}_i$
has a state $x_i \in \{0,1\}$, with switching law given by
(\ref{prb}). The broadcast signal is $\pi$, computed by the controller
${\mathcal C}$. Once again we make use of an input-dependent switched
controller without memory
\begin{equation}
{\mathcal C} ~:~ \pi(k) = \frac{1}{100}|e(k)|,\quad k \in \N.
\end{equation}
For each initial number of active agents between $0$ and $N$, we performed
Monte-Carlo experiments with $10^4$ realisations. In Figure
\ref{markov_ne}, the results for two complementary situations are
shown. The upper sequence represents the expected value of resource
consumption for agents whose initial state is $x_i(0) = 1$,  whereas the
lower series corresponds to the same value for agents with $x_i(0) =
0$. This plot shows that there exists a dependence of the expected value
$\bar x_i$ on the initial condition of an agent, implying the process is non-ergodic.

From a control-theoretical viewpoint this result may seem adequate, since
$\lim_{k
\to\infty}  e(k) = 0$ almost surely. Thus, resources are totally distributed. However, as the obtained closed-loop system does not present ergodicity, the resources are not shared fairly amongst all agents. Hence, the controller fails to achieve predictability.
%The same conclusions obtained here hold for the previous, simpler example.

\ifpdf

\begin{figure}
  \centering
  \includegraphics[width=\columnwidth,clip=true, trim=2cm 2cm 2cm 15cm]{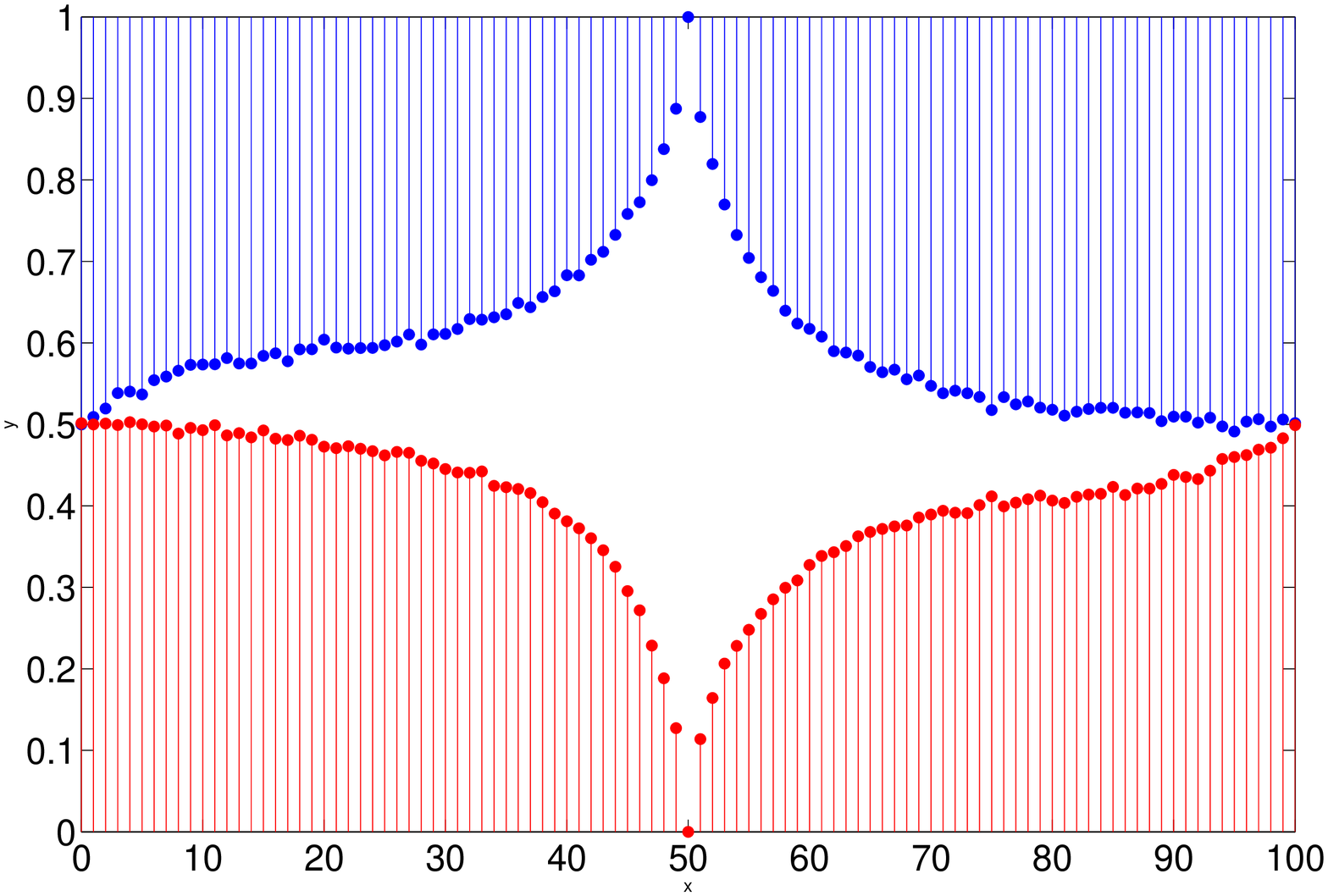}
  \caption{Switched controller simulation results:
average resource consumption
against the number of initial active systems}\label{nerg}
\end{figure}

\else

\begin{figure}
  \centering
  \begin{psfrags}
    \psfrag{x}[t]{\footnotesize Number of initial active systems}
    \psfrag{y}[b]{\footnotesize Average resource consumption}
  \includegraphics[width=\columnwidth]{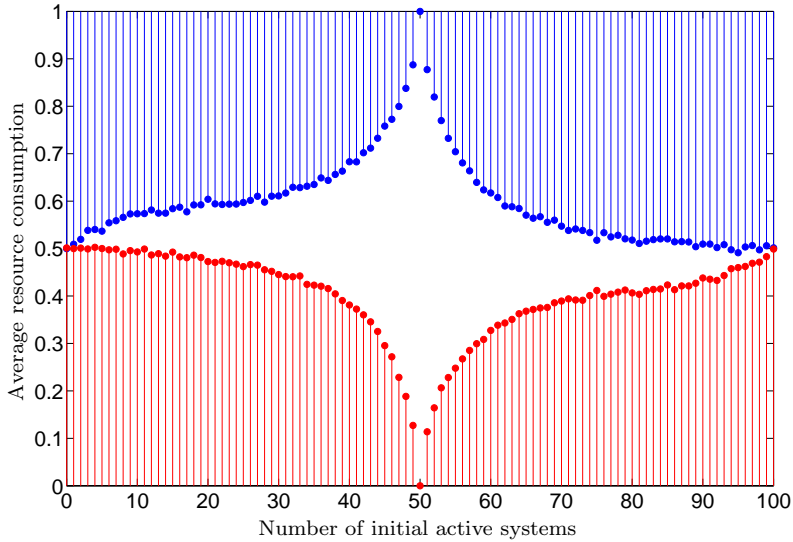}
  \caption{Switched controller simulation results.}\label{nerg}
  \end{psfrags}
\end{figure}

\fi
\end{eg}

%\subsection{Analysis}

We now consider the switching control case, in which the switching signal $\sigma$ is a design variable, together with the controller matrices $(A_{ci}, B_{ci}, C_{ci}, D_{ci})$, $i \in  [n_s]$, as in Example 1. A {\em state dependent} switching rule is a well-established structure for this control design problem \cite{Liberzon99}. In this setting, $\sigma$ is a composition of a function $u \, : \, \R^{n_c} \to  [n_s]$ with the controller state $x_c \, : \, \N \to \R^{n_c}$, that is, $\sigma(k) = u(x_c(k))$ for all $k \in \N$. Let us illustrate an issue that could possibly arise in this formulation. Consider that $n_s = 2$ and that the function $u$ divides the state space in two disjoint sets $\mathbb{S}_1$ and $\mathbb{S}_2$ (e.g. two half-spaces), such that $u(x) = 1$ for all $x \in \mathbb{S}_1$ and $u(x) = 2$ for all $x \in \mathbb{S}_2$. Since the dynamic behaviour of (\ref{eq_c}) has to be considered,
%in the augmented model (\ref{dynamical}),
the mappings $w_\ell(\cdot)$ will also depend on the internal mode of the
switched controller ${\mathcal C}$ and, specifically, on the switching
function $\sigma$. Noting that, whenever $x_c(k)$ is in $\mathbb{S}_1$,
the switching function is such that $\sigma(k) = 1$, there is a null
probability of switching to the other controller mode, one concludes that
the assumptions in \cite{BarnsleyDemkoEltonEtAl1988} are not satisfied by
this controller. This issue can be circumvented if the designer allows a
small probability of switching to every other mode at any point in the
state space. In the switching perturbation case, the designer focuses on
determining the controller matrices $(A_{ci}, B_{ci}, C_{ci}, D_{ci})$, $i
\in  [n_s]$, aiming to provide a robust and stable closed-loop
system with respect to any possible switching signal. It can be seen as a
two-level design, where the controller realisation and the switching
signal can be devised separately, targeting different objectives. For this
particular case, we guarantee the existence of a unique invariant measure
by ensuring the existence of a contraction property, and under some
additional assumptions. For example, a contraction can be ensured using
the following lemma, which is based on the classic stability result
\cite{Daafouz02}. In the following statement, for symmetric matrices,
$P,Q$ we write $P\prec Q$, if $Q-P$ is positive definite.

\begin{lem} \label{lem01}
Consider the switched linear system
\begin{equation}
\label{eq:switchsys}
 x(k+1) = A_{\sigma(k)} x(k),
\end{equation}
where $x \in\R^{n}$ is the state and $\sigma \, : \, \N \to  [n_s]$ is a switching sequence. If there exist positive definite symmetric matrices $P_1,\cdots,P_{n_s} \in \mathbb{S}_+^{n}$ satisfying the following linear matrix inequalities
\begin{equation}\label{jamal}
A_i' P_j A_i - P_i \prec 0,
\end{equation}
for all $(i,j) \in [n_s]^2$, then $\eqref{eq:switchsys}$ is
exponentially stable for arbitrary switching sequences $\sigma$ and there exists $m \in \N$ sufficiently large such
that
\begin{equation}\label{norm}
\|A_{i_m} \cdots A_{i_1}\| < 1
\end{equation}
holds for any sequence of indices $i_1,\cdots,i_m \in  [n_s]$.
\end{lem}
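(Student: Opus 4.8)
The plan is to exhibit a mode-dependent (switched) Lyapunov function and show that the linear matrix inequalities \eqref{jamal} force it to contract by a uniform factor at every step, regardless of the switching sequence. Concretely, I would set $V(k) \defeq x(k)' P_{\sigma(k)} x(k)$. If mode $i$ is active at time $k$ and mode $j$ at time $k+1$, then $x(k+1) = A_i x(k)$, so
\[
V(k+1) = x(k)' A_i' P_j A_i\, x(k),
\]
and the inequality $A_i' P_j A_i - P_i \prec 0$ is exactly the statement that $V$ strictly decreases along this transition. Quantifying over all pairs $(i,j)\in[n_s]^2$ guarantees the decrease no matter which mode comes next.

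The first genuine step is to upgrade the finitely many strict inequalities into a single uniform contraction. Since $[n_s]^2$ is finite and each pencil satisfies $A_i' P_j A_i \prec P_i$, I would let $\beta_{ij}$ be the largest eigenvalue of the symmetric matrix $P_i^{-1/2} A_i' P_j A_i\, P_i^{-1/2}$ (equivalently, the largest generalised eigenvalue of $(A_i' P_j A_i, P_i)$), which is $<1$ precisely because of \eqref{jamal}, and set $\beta \defeq \max_{(i,j)}\beta_{ij} \in (0,1)$, so that $A_i' P_j A_i \preceq \beta P_i$ for every pair. This yields $V(k+1)\le \beta V(k)$ and hence $V(k) \le \beta^k V(0)$ for any admissible $\sigma$.

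Next I would convert the decay of $V$ into a decay of the Euclidean norm using the uniform spectral bounds $\underline p \defeq \min_i \lambda_{\min}(P_i)>0$ and $\bar p \defeq \max_i \lambda_{\max}(P_i)$. From $\underline p\,\|x(k)\|^2 \le V(k)\le \beta^k V(0) \le \beta^k \bar p\,\|x(0)\|^2$ I obtain
\[
\|x(k)\| \le \sqrt{\bar p/\underline p}\,\beta^{k/2}\,\|x(0)\|,
\]
which is exponential stability, uniform over all switching sequences. Finally, writing $x(m) = A_{i_m}\cdots A_{i_1} x(0)$ and taking the supremum over $x(0)\neq 0$ gives $\|A_{i_m}\cdots A_{i_1}\| \le \sqrt{\bar p/\underline p}\,\beta^{m/2}$ for every index string; choosing $m$ with $\sqrt{\bar p/\underline p}\,\beta^{m/2}<1$ establishes \eqref{norm} simultaneously for all sequences.

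The main obstacle, and the reason $m$ must in general be taken strictly larger than $1$, is the condition-number factor $\sqrt{\bar p/\underline p}\ge 1$. A single product $A_{i_1}$ need not be a contraction in the induced norm even though $V$ decreases, because $\|x(k)\|$ may overshoot while $x' P_{\sigma(k)} x$ falls; the Lyapunov and Euclidean level sets are misaligned. The point is that this transient overshoot is bounded by the fixed factor $\sqrt{\bar p/\underline p}$, independent of the sequence, so the geometric decay $\beta^{m/2}$ eventually dominates it for a common $m$. Care is only needed to keep every bound uniform in $(i,j)$ and in $\sigma$, which is automatic here since there are finitely many modes.
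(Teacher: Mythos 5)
Your proposal is correct and follows essentially the same route as the paper's proof: a switched Lyapunov function $x' P_{\sigma(k)} x$, a uniform contraction factor extracted from the finitely many strict LMIs (your $\beta$ plays the role of the paper's $(1-\epsilon)^2$), conversion to the Euclidean norm via the spectral bounds (your $\sqrt{\bar p/\underline p}$ is the paper's constant $c=\sqrt{\beta/\alpha}$), and finally choosing $m$ so that the geometric decay beats the condition-number overshoot. The only cosmetic difference is that you construct the contraction factor explicitly via generalised eigenvalues, where the paper merely asserts the existence of a sufficiently small $\epsilon$.
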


\begin{proof}
First, note that, if there exist matrices $P_1,\cdots,P_{n_s}$ satisfying (\ref{jamal}), then there exists a sufficiently small scalar $\epsilon \in (0,1)$ such that
\begin{equation}\label{eps}
A_i' P_j A_i \prec (1 - \epsilon)^2 P_i
\end{equation}
hold for all $(i,j) \in  [n_s]^2$. Now define the quadratic,
time-varying function $v \, : \, \N\times \R^n  \to \R_+$, given by
\begin{equation}\label{lyap}
v(k,x) = x' P_{\sigma(k)} x.
\end{equation}
It is clear \cite{Horn1} that there exist constants $\alpha$ and $\beta$ such that
\begin{equation}\label{bound}
\alpha \|x\|^2 \leq x' P_i x \leq \beta \|x\|^2 \quad \forall \, i \in [n_s]
\end{equation}
hold for any $x \in \R^n$; here, $\|\cdot\|$ can be any vector norm.

Let us first prove that \eqref{eq:switchsys} is exponentially stable for
arbitrary switching signals $\sigma$. To this end, fix a switching
sequence $\sigma$ and
 note that (\ref{eps}) implies that $v$ satisfies along trajectories
$\{ x(k) \}$ corresponding to this switching sequence that
\begin{equation} \label{dec}
v(k+1,x(k+1)) \leq (1 - \epsilon)^2 v(k,x(k))
\end{equation}
for all $k \in \N$. Therefore, an inductive argument applied to (\ref{dec}) yields
\begin{equation}
v(k,x(k)) \leq (1 - \epsilon)^{2k} v(0,x(0)),
\end{equation}
for any $k \in \N$ and any given initial condition $x(0)\in \R^n$. Using the bounds (\ref{bound}), it follows that
\begin{equation} \label{exp}
\|x(k)\| \leq c (1 - \epsilon)^k \|x(0)\|
\end{equation}
holds for all $k \in \N$, where $c \defeq \sqrt{\beta/\alpha}$. As
$\sigma$ and $x(0)$ were arbitrary in this argument, we obtain exponential stability.

Finally, let us move our attention  to (\ref{norm}). Noting that
\begin{equation}
\|A_{i_k} \cdots A_{i_1}\| = \max_{w \neq 0}\frac{ \|A_{i_k} \cdots A_{i_1} w\| } {\|w\|},
\end{equation}
one can take $w$ as any initial condition $x(0)$ to $\Sigma$ and, since (\ref{exp}) holds for any switching sequence $\sigma$ and any $x(0)$, it follows that
\begin{equation}
\|A_{i_k} \cdots A_{i_1}\| = \max_{x(0) \neq 0} \frac{ \|x(k)\| }{\|x(0)\|} \leq c(1 - \epsilon)^k
\end{equation}
holds for all $k \in \N$. Since there exists a sufficiently large $m \in
\N$ such that $(1 - \epsilon)^m < c^{-1}$, (\ref{norm}) holds. This
completes the proof.
\end{proof}

\begin{rem}
The implication of Lemma \ref{lem01} is that, provided Equation \ref{eq:switchsys} represents the closed loop system, and some additional matrix inequality conditions hold, there always exists a sufficiently large number of jumps that ensures the contractivity of any possible switching chain. The existence of a unique invariant measure can be shown under assumptions similar to those of Theorems 1 and 3.  \end{rem}

%\begin{description}
%\item[{\bf A1}] {\em Probabilities:} at any mode, any switching function (state-dependent or not) must have a nonzero probability of reaching every possible mode in the system. This assumption ensures the probabilities are bounded away from zero in the IFS framework.
%\item[{\bf A2}] {\em Contractions:} based on Lemma \ref{lem01}, the controller matrices $A_{ci}$, $i \in  [n_s]$, must satisfy the inequalities
%\begin{equation}
%A_{ci}'P_j A_{ci} - P_i < 0
%\end{equation}
%for all $(i,j) \in  [n_s]^2$ and for some positive definite matrices $P_1,\cdots,P_{n_s} \in \S_+^{n_c}$. More general conditions can also be adopted, provided exponential stability is ensured, as (\ref{exp}) implies (\ref{norm}).
%\end{description}

%!TEX encoding = UTF-8 UnicodeFinally, let us remark that classical Markov jump linear controllers \cite{Costa_disc} satisfy assumptions {\bf A1} and {\bf A2}, if the probability transition matrix that defines their associated Markov chain is positive {\color{red} Matheus: is this correct - this seems very restrictive}. This hypothesis is quite usual in the context of Markovian processes, as non-ergodicity is completely avoided under this assumption \cite{Leon_Garcia}.

\section{Control with poles on the unit circle}
\label{comments-pi}

%{\bf Fabian/Jakub: There is significant scope for a theorem here.}

We finally come to the most interesting observation of the paper.
In many applications, one wishes the error $e = r - \hat y$ to be convergent, that is, $\lim_{k \to \infty} e(k) = 0$. Consequently, controllers with integral action, such as the Proportional-Integral (PI) controller, are widely adopted \cite{Franklin,Franklin_dig}.
See Example \ref{linear2}, which can be implemented as:
\begin{equation}\label{pid1}
\pi(k) = \pi(k-1) + \kappa \big[ e(k) - \alpha e(k-1) \big],
\end{equation}
which means its transfer function from $e$ to $\pi$ is given by
\begin{equation}\label{pid2}
C(z) \defeq \frac{ \hat \pi (z) }{ \hat e(z) } = \kappa\frac{1 - \alpha z^{-1}}{1 - z^{-1}}.
\end{equation}
Since this transfer function is not asymptotically stable, any associated realisation matrix $A_c$ will not be Schur.
Note that this is the case for any controller with any sort of integral action, \emph{i.e.}, pole at $z = 1$.\newline

\begin{thm}
    \label{thm:pole}
    Consider $N$ agents with states $x_i, i=1,\ldots,N$. Assume
    that there is an upper bound $M$ on the different values the agents
    can attain, \emph{i.e.}, for each $i$ we have $x_i \in {\cal A}_i =\{
    a_1,\ldots,a_M \}\subset \R$
    for a given set ${\cal A}_i$.\newline

    Consider the feedback system in Figure \ref{system}, where ${\mathcal
      F}\, : \, y \mapsto \hat y$ is a finite-memory moving-average (FIR)
    filter.  Let ${\cal C}_L$ be a linear marginally stable single-input
    single-output (SISO) system with a pole $s_1 = e^{q i\pi}$ on the unit
    circle where $q$ is a rational number. Assume the controller
    ${\mathcal C} \, : \, e \mapsto \pi$ is the cascade of ${\cal C}_L$
    and a continuous map ${\cal C}_p: \R \to [0,1]$, i.e. if $\hat \pi
    (k)$ is the output of ${\cal C}_L$ at time $k$, then the signal from
    the controller is $\pi(k) = {\cal C}_p(\hat \pi(k))$. Then the
    following holds.

    \begin{enumerate}
      \item[(i)] The set ${\cal O}_{\cal F}$ of possible output values of the filter ${\cal F}$ is finite.
      \item[(ii)] If the real additive group ${\cal E}$ generated by
        $\{ r - \hat y \mid \hat y \in {\cal O}_{\cal F} \}$ is discrete, then
        the closed-loop system cannot be ergodic.
    \end{enumerate}
\end{thm}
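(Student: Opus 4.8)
The plan is to establish the two parts separately, deriving the non-ergodicity in part (ii) from a quantity that is conserved by the marginal mode of $\mathcal{C}_L$. Part (i) is the easy half: since each $x_i(k)$ takes values in the finite set $\mathcal{A}_i$, the aggregate $y(k)=\sum_{i=1}^N x_i(k)$ ranges over the finite set of attainable sums, which has at most $M^N$ elements. A finite-memory moving-average filter produces $\hat y(k)$ as a fixed finite linear combination of the finitely many most recent values of $y$; a finite linear image of a finite set is finite, so $\mathcal{O}_{\mathcal F}$ is finite. No use of the pole location is needed here.

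For part (ii), I would first record that $e(k)=r-\hat y(k)$ lies in the finite set $\{\,r-\hat y : \hat y\in\mathcal{O}_{\mathcal F}\,\}\subseteq\mathcal{E}$, and that discreteness of $\mathcal{E}$ forces $\mathcal{E}=c\,\mathbb{Z}$ for some $c>0$ (the case $c=0$ being trivial). I would then write $\mathcal{C}_L$ in state space and split its state into the $A_c$-invariant marginal subspace, spanned by the eigenvectors on the unit circle, and the remaining Schur part; the marginal component $v$ evolves autonomously as $v(k+1)=A_v v(k)+B_v e(k)$. The hypothesis that $q$ is rational is the crucial structural input: $e^{q i\pi}$ is then a root of unity, so $A_v$ has finite order $T$ (for instance $A_v^{T}=I$ with $T=2m$ when $q=p/m$ in lowest terms). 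Iterating over one full period gives $v(k+T)-v(k)=\sum_{j=0}^{T-1}A_v^{j}B_v\,e(k+T-1-j)$, and since each $e(\nu)\in c\,\mathbb{Z}$, this increment lies in the subgroup $\mathcal{M}$ generated by the finitely many vectors $\{\,c\,A_v^{j}B_v : j=0,\dots,T-1\,\}$.

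The conclusion would then run as follows. Suppose $\mathcal{M}$ is a lattice $\Lambda$ in the marginal subspace. Passing to the $T$-step transition operator $P^{T}$, the rotation completes a full turn and $v(kT)\bmod\Lambda$ is exactly invariant and equal to $v(0)\bmod\Lambda$; thus every trajectory remembers the coset of its initial marginal state forever. This is a nonconstant, initial-condition-dependent invariant, so the ergodic average in \eqref{eq:ergodicprop} cannot be independent of $\xi(0)$. Concretely, two initial conditions in distinct cosets satisfy $\|v_1(k)-v_2(k)\|\ge \dist\!\big(v_1(0)-v_2(0),\Lambda\big)>0$ for all $k$, so the induced path measures admit no asymptotic coupling, and Theorem~\ref{coupling-argument} then yields two distinct ergodic invariant measures. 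Either way the closed-loop system fails to be ergodic.

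The hard part will be the discreteness of $\mathcal{M}$. For pure integral action (the pole $z=1$, so $T=1$ and $A_v$ scalar) one has $\mathcal{M}=c\,B_v\,\mathbb{Z}$, which is automatically a lattice; this already settles the PI controller of Example~\ref{linear2} and is the cleanest instance. For a genuine rational rotation, however, $\mathcal{M}$ is only a \emph{finitely generated} subgroup of the two-dimensional marginal subspace, and a finitely generated subgroup of $\mathbb{R}^{d}$ can be dense. Rationality of $q$ is precisely what keeps the generating set finite (finite order $T$), while discreteness of $\mathcal{E}$ is what must be leveraged to rule out density; showing that the accumulated marginal state stays on a genuine lattice rather than filling the subspace is the delicate point, and is exactly the role the hypothesis on $\mathcal{E}$ is meant to play.
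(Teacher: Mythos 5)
Your proposal follows the paper's own proof almost step for step. Part (i) is identical: finitely many agent states give finitely many values of $y$, and a finite-memory moving average of a finite set is finite. In part (ii) the paper performs exactly your decomposition: it takes a minimal realization of ${\cal C}_L$ with block structure separating the unit-circle block $Q$ (equal to $1$, $-1$, or a $2\times 2$ rotation) from a Schur block, uses rationality of $q$ to get $Q^K = I$, observes that the marginal component of the controller state can only ever visit the coset of its initial value modulo the group generated by $\{Q^\nu B_1 e \,:\, \nu = 0,\ldots,K-1,\ e \in {\cal E}\}$, and concludes non-ergodicity because trajectories started in distinct cosets remain a positive distance apart, so no asymptotic coupling exists and Theorem~\ref{coupling-argument} applies. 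Your invariant, your use of orthogonality of $Q$ to push the separation estimate through all times $k$, and your appeal to the coupling theorem are all the same as the paper's.

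The one step you declined to complete --- discreteness of the finitely generated group ${\cal M}$ in the genuinely two-dimensional case --- is precisely the step the paper does not prove either: it simply asserts ``by assumption, this set is discrete in $\R$ or $\R^2$, as the case may be,'' importing discreteness of ${\cal E} \subset \R$ into a claim about a finitely generated subgroup of $\R^2$. Your caution is warranted, because that implication is false in general. Take the pole $s_1 = e^{2\pi i/5}$ (so $q = 2/5$ is rational), $B_1 = (1,0)'$, and ${\cal E} = \mathbb{Z}$: then ${\cal M}$ is the additive group $\mathbb{Z}[\zeta_5]$ with $\zeta_5 = e^{2\pi i /5}$, which has rank $4$ as an abelian group and is therefore dense in $\R^2 \cong \mathbb{C}$, not a lattice. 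Structurally, ${\cal M}$ is $Q$-invariant by construction, and by the crystallographic restriction a planar lattice admits a rotational symmetry only of order $1,2,3,4$ or $6$; for every other rotation order, ${\cal M}$ is dense, the cosets are dense in one another, and the ``positive distance apart'' step that both you and the paper need collapses. So the status is: your scalar argument ($Q = \pm 1$, poles at $z = \pm 1$, which covers the motivating PI/integral-action case) is complete and agrees with the paper; for rotation orders $3,4,6$ discreteness does hold (e.g., $\mathbb{Z}[\zeta_3]$, $\mathbb{Z}[i]$ are lattices) and the argument goes through; for all remaining rational rotations the gap you identified is genuine --- and it is a gap in the paper's proof as much as in your attempt, since the theorem as stated would need either a restriction on the admissible poles or an entirely different argument there.
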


\begin{rem}
One implication of the theorem is to illustrate the separation of the classical
performance of the closed loop, and ergodic behaviour. It is perfectly possible
for the closed loop to perform its regulation function well, and an the same time
destroy the ergodic properties of the closed loop.
\end{rem}

\begin{proof}
(i)    By assumption, the states of the agents $x \in \R^N$ can only attain
    finitely many values. Consequently, the set of possible values of $y$
    is finite and thus also the set of possible outputs of the filter is
    finite, as it is just the moving average over a history of finite
    length.\newline

(ii) We denote by ${\cal E}$ the additive subgroup of $\R$
    generated by the filter outputs.
By (i), the set of possible inputs to the linear part of the controller is
finite at any time $k\in \N$. Let $(A,B,C)$ be a minimal realization of
the linear controller with $A \in \R^{n_c \times n_c}$, $B,C^T\in \R^{n_c}$. Without any loss of generality, assume that
\begin{equation*}
    A =
    \begin{bmatrix}
        Q & 0 \\ 0 & R
    \end{bmatrix},\quad B =
    \begin{bmatrix}
        B_1 \\ B_2
    \end{bmatrix} , \quad C =
    \begin{bmatrix}
        C_1 & C_2
    \end{bmatrix}.
\end{equation*}
Here $Q$ is equal to $1,-1$ or a $2 \times 2$ orthogonal matrix with the
eigenvalues $s_1$ and $\overline{s_1}$. The matrix $R$ is marginally Schur
stable. We will concentrate on the first (or first two) component of the state of
the controller, which we denote by $x^{(1)}$. Given an initial value
$x_0^{(1)}$ these states are given by
\begin{equation*}
    x^{(1)}(k) = Q^k x^{(1)}_0 + \sum_{\nu=0}^{k-1}Q^{k-\nu-1} B_1 e(\nu).
\end{equation*}
For some power $K$ we have by assumption that $Q^K = I_2$. Thus
$x^{(1)}(k)$ is an element of the set ${\cal Z}(x_0)$ given by
\begin{equation*}
\left \{ Q^k x^{(1)}(0) +  \sum_{\nu=0}^{K-1} J^\nu B_1 e_\nu \ \Big\vert
      \ k =
      0,\ldots,K-1 ,e_\nu \in {\cal E} \right\}.
\end{equation*}
By assumption, this set is discrete in $\R$ or $\R^2$, as the case may
be. The state space of the controller may thus be partitioned into the
uncountably many equivalence classes under the equivalence relation on
$\R^{n_c}$ given by $x \sim y$, if $y^{(1)} \in {\cal Z}(x)$. These are
invariant under the evolution of the Markov chain.

Ergodic invariant measures which are concentrated on different equivalence
classes clearly cannot couple asymptotically as the respective
trajectories remain a positive distance apart. By Theorem~\ref{coupling-argument} the
Markov chain cannot be ergodic.
\end{proof}

\begin{rem}
We note that the conditions for non-ergodicity are satisfied, in
particular, if the resource consumption of agents is always a rational
number and the coefficients of the FIR filter are also rational. For
implementations on standard computers this will always be the case.
Other numerical issues issues may arise, though.
\end{rem}

%\subsection{An Example}

Let us illustrate the undesirable behaviour that may arise whenever a PI controller is being used in the closed-loop system.
In the first example, we point out that the integral action may be heavily dependent on the controller state initial condition. To this end, consider the feedback system in depicted in Figure \ref{system} with $N = 4$ agents, whose states $x_i$ are in the set $\{0,1\}$; as before, if $x_i = 1$, we say that agent $i$ has taken the resource or is {\em active}.\newline

Our main goal is to regulate the number of active agents around the
reference $r = 2$. We assume that two agents, namely $x_1$ and $x_2$, have the following probabilities of being active
\begin{equation}
p_{12}(x_i(k+1) = 1) = 0.02 +  \frac{0.95}{ 1 + \exp(-100(\pi(k) - 5))},
\notag
\end{equation}
whereas the remaining agents' probability of consuming the resource is given by
\begin{equation}
p_{34}(x_i(k+1) = 1) = 0.98 -  \frac{0.95}{ 1 + \exp(-100(\pi(k) - 1))}.
\notag
\end{equation}
Note that their behaviour is, thus, complementary; indeed, if the control signal $\pi(k) \gg 5$, then $x_1$ and $x_2$ tend to be active, and, on the other hand, if $\pi(k) \ll 1$, then $x_3$ and $x_4$  are more susceptible to take the resource.\newline

In this design problem, we implement two types of controllers ${\mathcal C}$: a PI controller and its lag approximant. For this example, the filter ${\mathcal F}$ is the moving avergage (FIR) filter defined in (\ref{fir}). The PI controller is the one implemented in (\ref{pid1}) with $\kappa = 0.1$ and $\alpha = -4$; this controller is approximated by a lag controller with $\kappa = 0.1$, $\alpha = -4.01$ and $\beta = 0.99$. Figure \ref{sim1} points out that the PI controller regulates the average number of active agents $\bar y$, whereas the lag controller presents a steady state error (as expected). However, Figure \ref{sim2} shows different average trajectories for the first agent $\bar x_1$ for different initial conditions of the controller ${\mathcal C}$, namely $x_c(0) = 50$ and $x_c(0) = -50$. As the figure points out, this agent's behaviour is completely dependent on the initial value of $x_c$ when ${\mathcal C}$ is the PI controller; this undesired behaviour vanishes on the long run when a lag controller is used -- that is, the system becomes ergodic. Figure \ref{Xc} illustrates the dynamic response of the controller state $x_c$ for both initial conditions and both controllers; both cases converge to the same value for the lag structure and this is not observed when the designer uses a PI.\newline

\begin{figure}
  \centering
  \begin{psfrags}
    \psfrag{a}[c]{\scriptsize $\quad x_c(0) = 50$}
    \psfrag{b}[l]{\scriptsize $x_c(0) = -50$}
    \psfrag{x}[t]{\footnotesize $k$}
    \psfrag{y}[b]{\footnotesize $\bar y(k)$}
    \psfrag{y2}[b]{\footnotesize $\bar x_1(k)$}
    \includegraphics[width=.7\columnwidth]{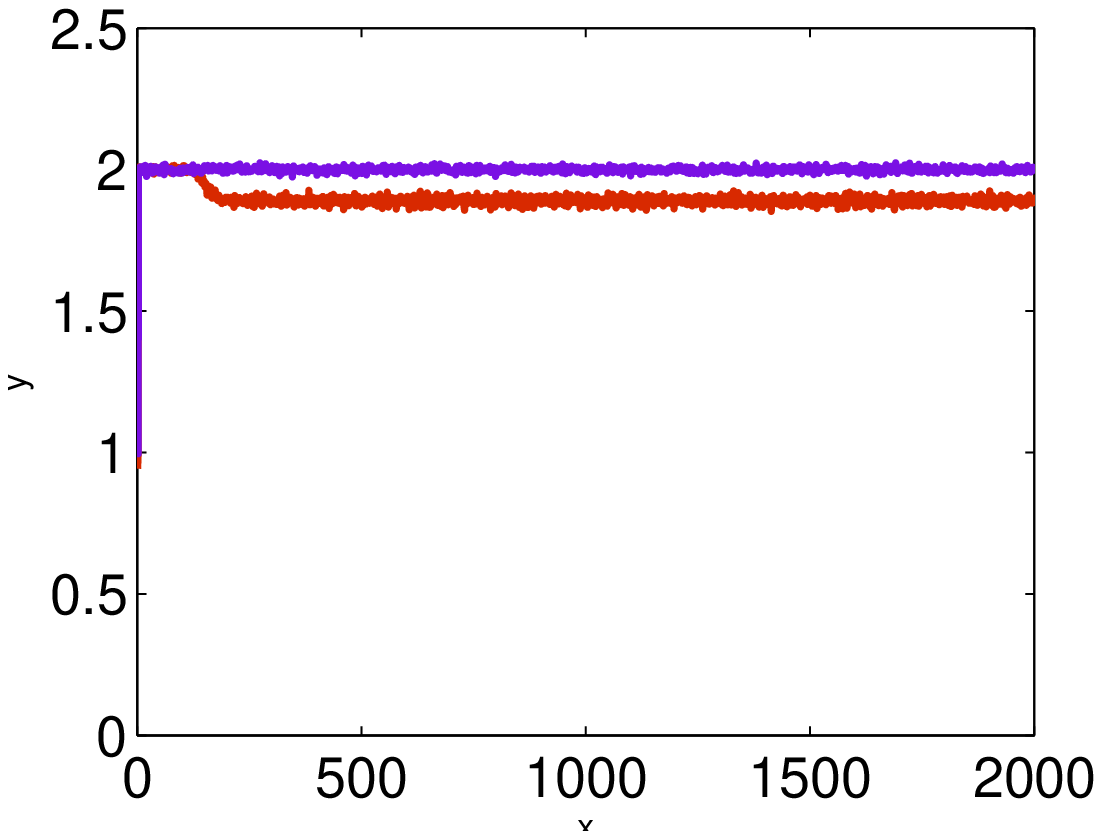}
  \caption{Average number of active agents dynamics.}\label{sim1}
  \end{psfrags}
\end{figure}

\begin{figure}
  \centering
  \begin{psfrags}
    \psfrag{a}[c]{\scriptsize $\quad x_c(0) = 50$}
    \psfrag{b}[l]{\scriptsize $x_c(0) = -50$}
    \psfrag{x}[t]{\footnotesize $k$}
    \psfrag{y}[b]{\footnotesize $\bar y(k)$}
    \psfrag{y2}[b]{\footnotesize $\bar x_1(k)$}
    \includegraphics[width=.7\columnwidth]{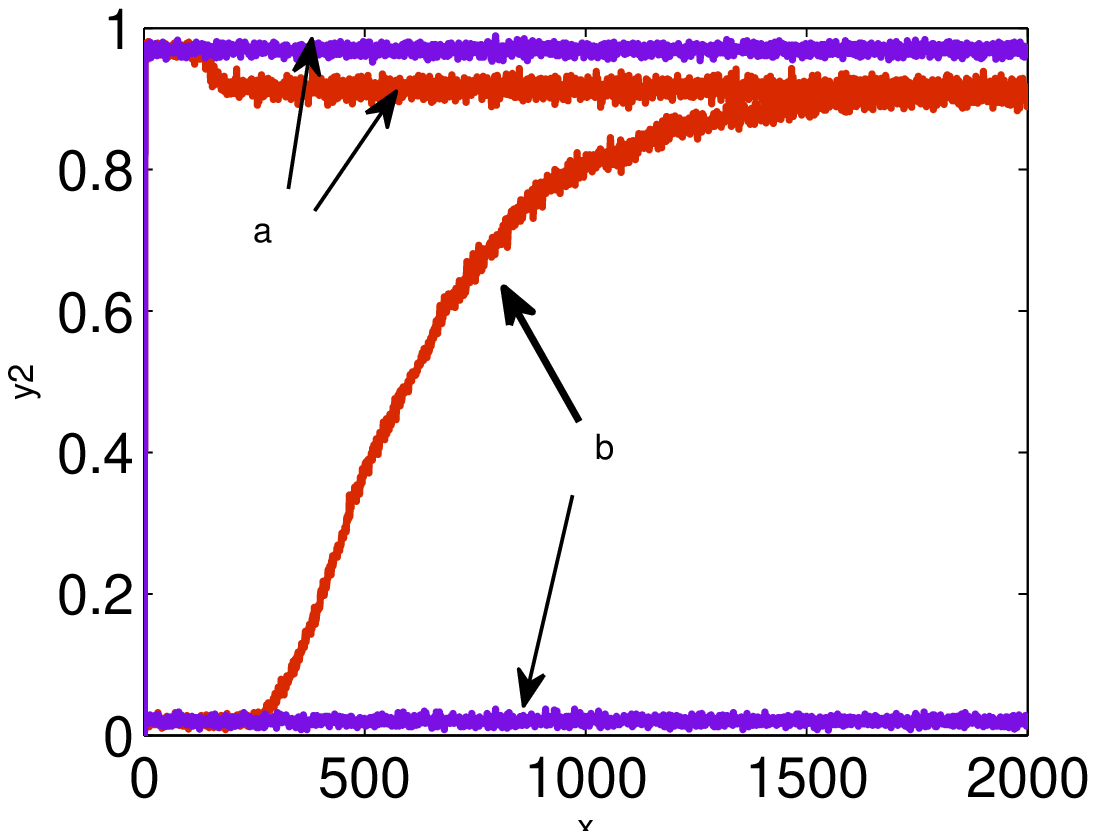}
  \caption{Average trajectory of the first agent.}\label{sim2}
  \end{psfrags}
\end{figure}

\begin{figure}
  \centering
  \begin{psfrags}
    \psfrag{a}[l]{\footnotesize $x_c(0) = 50$}
    \psfrag{b}[l]{\footnotesize $x_c(0) = -50$}
    \psfrag{x}[t]{\footnotesize $k$}
    \psfrag{y}[b]{\footnotesize $\bar x_c(k)$}
    \includegraphics[width=0.9\columnwidth]{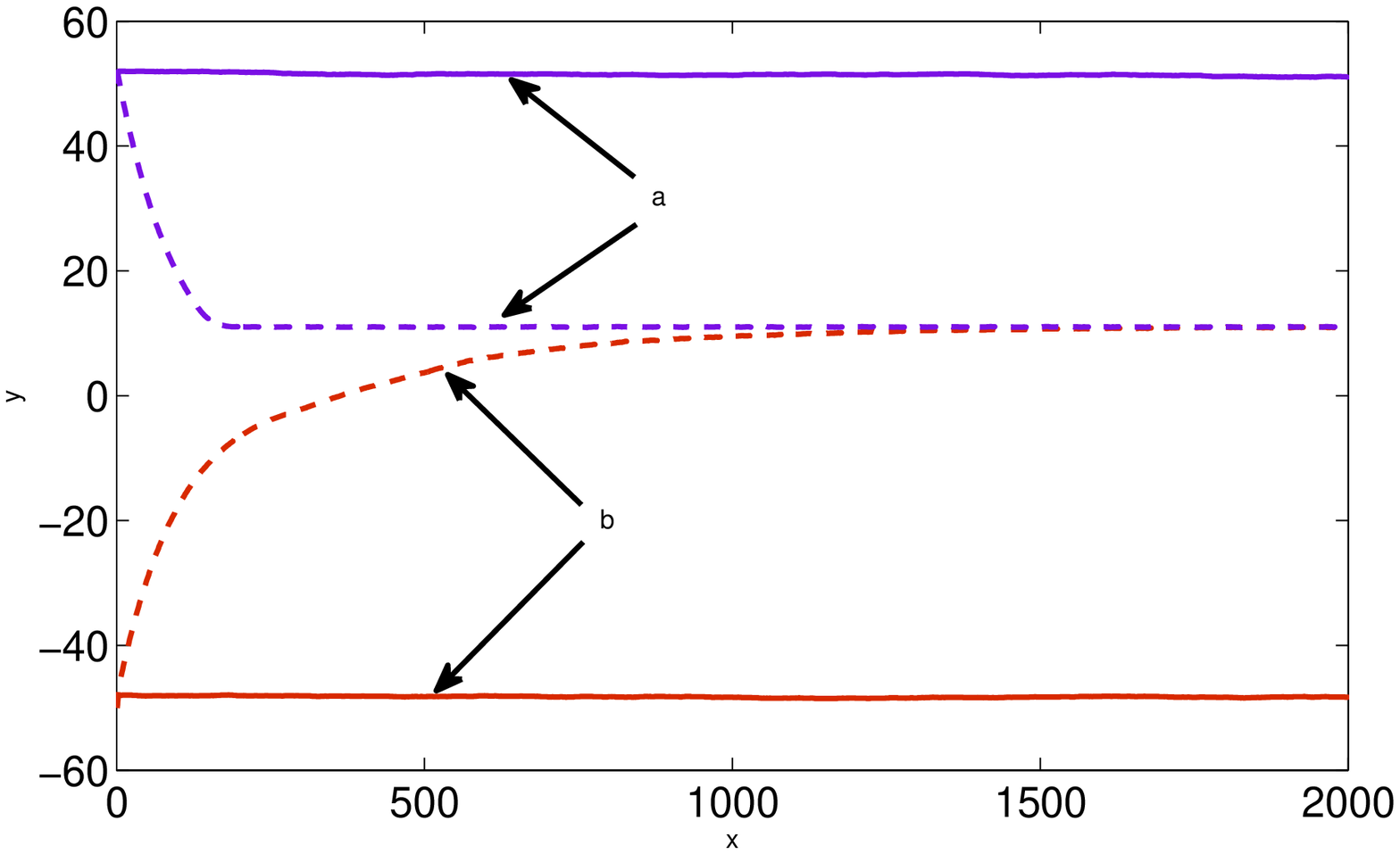}
  \caption{Average controller state dynamics.}\label{Xc}
  \end{psfrags}
\end{figure}

\section{Conclusions}

We have shown that across a number of problems studied under the banners of Smart Grid and Smart Cities,
classical proportional-integral (PI) control
cannot give rise to a feedback system with a unique invariant measure, which can lead to non-ergodic behaviour under benign conditions.
 The undesirable behaviour can be alleviated by the use of an IFS-based controller or by \emph{ad hoc} modifications to the classical controllers.
 Notice, in particular, that the input-dependent switched controllers fail to ensure the probabilities are bounded away from zero. This
  can be solved by replacing $f(\cdot) = \alpha|\cdot|$ by $\hat f(\cdot) = \alpha|\cdot| + \beta$, where both $\alpha$ and $\beta$ are defined to ensure $\pi \in (\epsilon,1 - \epsilon)$ for some $\epsilon > 0$. Similarly, the PI control can be modified by introducing a lead/lag compensator \cite{Franklin,Franklin_dig}, with the  effect of ensuring a contraction. The same effect can be achieved by adjusting the filter.
  Still, IFS-based controllers may present a principled approach to the problem.

\bibliographystyle{ieeetran}
\bibliography{ref,mdps}

\end{document}